\newtheorem{prop}{Proposition}[section]
\newtheorem{remark}{Remark}[section]
\def\bU{{\bf{U}}}
\def\bV{{\bf{V}}}
\def\bG{{\bf{G}}}
\def\bS{{\bf{S}}}
\def\bF{{\bf{F}}}
\def\btU{{\bf{\widetilde{U}}}}
\def\btS{{\bf{\widetilde{S}}}}
\journal{arXiv}
\begin{document}

\begin{frontmatter}
\title{A CDG-FE method for the two-dimensional Green-Naghdi model with the enhanced dispersive property}
\author[a]{Maojun Li}
\ead{limj@cqu.edu.cn}
\author[a]{Liwei Xu\corref{cor1}}
\ead{xul@uestc.edu.cn}
\cortext[cor1]{Corresponding author.}
\author[b]{Yongping Cheng}
\ead{cyp@cqu.edu.cn}

\address[a]{School of Mathematical Sciences, University of Electronic Science and Technology of China, Sichuan, 611731, P.R. China }
\address[b]{College of Mathematics and Statistics, Chongqing University, Chongqing, 401331, P.R. China}
\begin{abstract}
In this work, we investigate numerical solutions of the two-dimensional shallow water wave using a fully nonlinear Green-Naghdi model with an improved dispersive effect.  For the purpose of numerics, the Green-Naghdi model is rewritten into a formulation coupling a pseudo-conservative system and a set of pseudo-elliptic equations. Since the pseudo-conservative system is no longer hyperbolic and its Riemann problem can only be approximately solved, we consider the utilization of the central discontinuous Galerkin method which possesses an important feature of needlessness of Riemann solvers. Meanwhile, the stationary elliptic part will be solved  using the finite element method.  Both the well-balanced  and the positivity-preserving features which are highly desirable in the simulation of the shallow water wave will be embedded into the proposed numerical scheme. The accuracy and efficiency of
 the numerical model and method will be illustrated through numerical tests.
\end{abstract}

\begin{keyword}
Enhanced dispersive property,  Green-Naghdi model, Central discontinuous Galerkin method,  Finite element method, Positivity-preserving property, Well-balanced scheme
\end{keyword}

\end{frontmatter}

\section{Introduction}
\label{sec:1}
In an incompressible and inviscid fluid, the propagation of surface waves is governed by the Euler equation  with nonlinear boundary conditions at the free surface and the bottom. In its full generality, this problem is very complicated to be solved, both mathematically and numerically. Usually, simplified models have been derived to describe the behavior of the solution in some physical specific regimes, such as  the nonlinear shallow water equations,  the Korteweg-de Vries equations, the Boussinesq type models and the Green-Naghdi models, and to name a few.

The nonlinear shallow water equations (also called Saint-Venant equations) are a set of hyperbolic partial differential equations (or parabolic if viscous shear is considered) that describe the flow below a pressure surface in a fluid. They can model the propagation of strongly nonlinear waves up to breaking and run-up in near-shore zones. However, they fail to properly describe wave propagation in deep water or wave shoaling because they do not incorporate frequency dispersion. The Boussinesq systems carry weak frequency dispersion but are typically restricted to small amplitude waves with relatively weak nonlinearity. Extensions of the shallow water equations that incorporate frequency dispersion can be traced back to Serre (\cite{Serre1953}) who derived a one-dimensional (1D) system of equations for fully nonlinear weakly dispersive waves over the flat bottom in 1953. In 1976, Green and Naghdi (\cite{Green1976}) presented the two-dimensional (2D) counterpart of these equations for wave propagation over variable bottom topography. The Green-Naghdi model is a class of fully nonlinear weakly dispersive shallow water wave equations, including dispersive effects and supporting traveling solitary wave solutions. Therefore, it can simulate the long-time propagation of solitary waves with relatively large amplitude.

Due to the significance of the Green-Naghdi model, there has been increasing interest in the numerical simulation of the Green-Naghdi model  in the past decade. A fourth-order compact finite volume scheme was proposed for solving the fully nonlinear and weakly dispersive Boussinesq type equations (\cite{Cien2006,Cien2007}). A hybrid numerical method using a Godunov type scheme was presented to solve the Green-Naghdi model over the flat bottom (\cite{MGH10}). A pseudo-spectral algorithm was developed for the solution of the rotating Green-Naghdi shallow water equations (\cite{Pearce2010}). A hybrid finite volume and finite difference splitting approach was presented for numerical simulation of the fully nonlinear and weakly dispersive Green-Naghdi model (\cite{Bonneton2011,Bonneton2011-1,Chazel2011,Lannes2015}). In particular, numerical investigations on a dispersive-effect-improved Green-Naghdi model have been reported in \cite{Chazel2011,Lannes2015}.   A well-balanced  central discontinuous Galerkin (CDG) method coupling with the finite element (FE) method (\cite{LiM2014}) was employed to solve the 1D fully nonlinear weakly dispersive Green-Naghdi model over varying topography. Recent works of other kinds of discontinuous Galerkin (DG) methods for the Green-Naghdi model include \cite{Duran2015,Panda2014}.

The DG method is a class of high order finite element methods, which was originally introduced in 1973 by Reed and Hill (\cite{Reed1973}) for the neutron transport equation. Thereafter, it has achieved great progress in a series of pioneer papers (\cite{Cock1989,Cock1989-1,Cock1990,Cock1998}). The DG method has its own advantages in dealing with numerical solutions for many problems in sciences and engineering, consisting of being very flexible to achieve high order of accuracy and handle complicated geometry and boundary conditions, and to name a few. As a variant of the DG method, the CDG method is also one of popular high order numerical methods defined on overlapping meshes, which was originally introduced for hyperbolic conservation laws (\cite{Liu2007}), and then for diffusion equations (\cite{Liu2008}). By evolving two sets of numerical solutions defined on overlapping meshes which provide more information on numerical solutions, the CDG method does not rely on any exact or approximate Riemann solver at element interfaces as in the simulation of the DG method. The CDG method  has  been successfully applied to solve various partial differential equations, such as the Hamilton-Jacobi equations (\cite{LiF2010}), the ideal MHD equations (\cite{LiF2011,LiF2012}), the Euler equations (\cite{LiM2016}) and  the shallow water equations (\cite{LiM2014,LiM2017}). Recently, a reconstructed CDG method has been developed in \cite{Dong2017} for the improvement of computational efficiency, and two kinds of CDG methods defined on unstructured overlapping meshes have also been presented in \cite{Xu2016,LiM2019} for the treatment of complex computational domains.

In this paper, we first derive a formulation describing the propagation of 2D fully nonlinear weakly dispersive water waves,  using the same approach as in \cite{Su1969} where the authors derived the 1D equation over the flat bottom. Based on this formulation, we derive a Green-Naghdi model with an enhanced dispersive effect, and then carry out a linear dispersion analysis for the corresponding linear equation to show the improvement on dispersive effects. This improvement on computational modeling is of great importance during the procedure of seeking more accurate numerical solutions of shallow water waves. In simulation of the derived 2D Green-Naghdi model, we usually encounter several difficulties:  dealing with the mixed spatial and temporal derivative terms in the flux gradient and the source term, particularly in the dispersive-effect-improved model to be considered in the current work;  maintaining the nonnegativity of the water depth and preserving the still-water stationary solution;  computational efficiency and accuracy in 2D simulation. Concerning the first challenge,  we reformulate the model into a hybrid system of the pseudo-conservative form with a nonhomogeneous source term  and the pseudo-elliptic equation (\cite{MGH10,LiM2014}).  Since the resulting dynamic equations are no longer hyperbolic,  we utilize the CDG method (\cite{Liu2007}) as the base scheme which has an important feature of being free of Riemann solvers. Meanwhile, we employ the FE method  to solve the elliptic part, and point out that there is no need for the CDG-FE hybrid method to make special treatments on the continuity and discontinuity due to the fact of two copies of solutions on overlapping meshes. Secondly, the reason for naming the pseudo-elliptic equation in our numerical model lies in the fact that the computed variable of water depth appears in the coefficients of the second-order derivative term and may be trivial in the dry area. On the other hand, it explains that the nonnegativity of computational water depth is extremely important in our numerical model and numerical scheme, and this issue has not been solved in the previous 1D work (\cite{LiM2014}) yet. In this work, we design a CDG method for the Green-Naghdi model not only maintaining the well-balanced property for the stationary solution but also preserving the nonnegativity for the solution at the dry area.   In addition to these issues on numerical modeling and schemes, computational efficiency (\cite{Xu2009,Lannes2015}) is also significantly important for 2D water wave simulations.  Since the CDG method is a local scheme similar to the DG method, the computation is implemented element by element at each time step and is thus friendly to the parallel execution. The computation load of solving the FE equation is dominant in our computation, and its fast solver will be considered in the future work.

The remainder of the paper is organized as follows. In section 2, we present a fully nonlinear strongly dispersive water wave model in 2D domain. In section 3, we propose a family of high order schemes, coupling positivity-preserving well-balanced CDG methods and continuous FE methods. In section 4, we perform a series of numerical experiments to demonstrate the well-balanced property, positivity-preserving property, high order accuracy as well as the capability of the Green-Naghdi model to describe the propagation of strongly nonlinear and dispersive waves. Some concluding remarks are given in section 5.

\section{Mathematical models}

\subsection{Green-Naghdi model}
\label{sec:2.1}

In this paper, we consider the following 2D  Green-Naghdi model
\begin{equation}\label{GN-NFB-2D}
\left\{\begin{array}{lclcl}
h_t+(hu)_x+(hv)_y=0,\\
(hu)_t+\left( hu^2+ \frac{1}{2}gh^2+\frac{1}{3}h^3 \Phi + \frac{1}{2} h^2 \Psi \right)_x + \left( huv \right)_y =-\left(gh+\frac{1}{2}h^2\Phi+h\Psi\right)b_x,\\
(hv)_t+\left(huv\right)_x+\left( hv^2+ \frac{1}{2}gh^2+\frac{1}{3}h^3\Phi+\frac{1}{2}h^2\Psi\right)_y=-\left(gh+\frac{1}{2}h^2\Phi+h\Psi\right)b_y,
\end{array} \right.
\end{equation}
where
\begin{eqnarray}
\Phi &=& - u_{xt}-u u_{xx}+ u_x^2 - v_{yt}-v v_{yy}+ v_y^2 - uv_{xy}- u_{xy}v+ 2 u_x v_y, \label{phi2D} \\
\Psi &=& b_x u_{t}+b_x u  u_{x}+b_{xx} u^2 + b_y v_{t} + b_y v v_y +b_{yy} v^2 + b_y u v_x + b_x u_y v + 2b_{xy}uv. \label{psi2D}
\end{eqnarray}
The derivation of the model is shown in Appendix.

\subsection{Green-Naghdi model with enhanced dispersion effect}
\label{sec:2.2}
We can observe from the last two equations in \eqref{GN-NFB-2D} that (\cite{Chazel2011})
\begin{eqnarray*}
u_t &=& -g(h+b)_x-uu_x-u_yv+\text{higher order terms}\\
    &\simeq& \alpha u_t+(1-\alpha)(-g(h+b)_x-uu_x-u_yv),
\end{eqnarray*}
and
\begin{eqnarray*}
v_t &=& -g(h+b)_y-vv_y-uv_x+\text{higher order terms}\\
    &\simeq& \alpha v_t+(1-\alpha)(-g(h+b)_y-vv_y-uv_x).
\end{eqnarray*}
Replacing $u_t$ and $v_t$ with $\alpha u_t+(\alpha-1)(uu_x+u_yv+g(h+b)_x)$ and $\alpha v_t+(\alpha-1)(vv_y+uv_x+g(h+b)_y)$ in \eqref{phi2D} and \eqref{psi2D}, respectively,  we obtain a modified  Green-Naghdi model as follows
\begin{equation}\label{GN-NFB-2Da}
\left\{\begin{array}{lclcl}
h_t+(hu)_x+(hv)_y=0,\\
(hu)_t+\left( hu^2+ \frac{1}{2}gh^2+\frac{1}{3}h^3 \Phi + \frac{1}{2} h^2 \Psi \right)_x + \left( huv \right)_y =-\left(gh+\frac{1}{2}h^2\Phi+h\Psi\right)b_x,\\
(hv)_t+\left(huv\right)_x+\left( hv^2+ \frac{1}{2}gh^2+\frac{1}{3}h^3\Phi+\frac{1}{2}h^2\Psi\right)_y=-\left(gh+\frac{1}{2}h^2\Phi+h\Psi\right)b_y.
\end{array} \right.
\end{equation}
with
\begin{eqnarray}
\Phi &=& - \alpha u_{tx}-(\alpha-2)u_x^2-\alpha uu_{xx}-\alpha u_{xy}v-2(\alpha-1)u_yv_x-(\alpha-1)g(h+b)_{xx} \nonumber \\
     & & -\alpha v_{ty}-(\alpha-2)v_y^2-\alpha vv_{yy}-\alpha uv_{xy}-(\alpha-1)g(h+b)_{yy}+2u_xv_y, \label{phi2Da}\\
\Psi &=& \alpha b_xu_t+\alpha b_xuu_x+\alpha b_xu_yv+(\alpha-1)gb_x(h+b)_x+ \alpha b_yv_t \nonumber\\
     & & +\alpha b_yvv_y+\alpha b_yuv_x+(\alpha-1)gb_y(h+b)_y+b_{xx}u^2+b_{yy}v^2+2b_{xy}uv.  \label{psi2Da}
\end{eqnarray}

It is apparent that the standard Green-Naghdi model corresponds to a particular case of the modified Green-Naghdi model with $\alpha=1$.

\subsection{Linear dispersive analysis}
\label{sec:2.3}

We will carry out a linear dispersion analysis for the modified Green-Naghdi model given in the previous section. We first linearize the model \eqref{GN-NFB-2Da} for the flat-bottom case ($b=\text{constant}$) with the trivial solution $h = h_0 > 0$ ($h_0$ is a constant), $u = u_0 = 0$ and $v = v_0 = 0$, i.e. looking for solutions of the form

\begin{equation*}
h=h_0+\tilde{h}, \ \ u=\tilde{u}, \ \ v=\tilde{v},
\end{equation*}
where $(\tilde{h},\tilde{u},\tilde{v})$ are small perturbations, and retain only linear terms. As a result,  we have
\begin{equation*}
\left\{ {\begin{array}{*{20}{l}}
\tilde{h}_t + h_0\tilde{u}_x + h_0\tilde{v}_y = 0~,\\
\tilde{u}_t + g\tilde{h}_x - \frac{1}{3}h_0^2 \left(\alpha \tilde{u}_{txx} + \alpha \tilde{v}_{tyx} + (\alpha-1)\tilde{h}_{xxx} + (\alpha-1)\tilde{h}_{yyx}\right) = 0~, \\
\tilde{v}_t + g\tilde{h}_y - \frac{1}{3}h_0^2 \left(\alpha \tilde{u}_{txy} + \alpha \tilde{v}_{tyy} + (\alpha-1)\tilde{h}_{xxy} + (\alpha-1)\tilde{h}_{yyy}\right) = 0~.
\end{array}} \right.
\end{equation*}
Then we look for perturbations as plane waves taking the form
\begin{equation*}
\left( \begin{array}{l}
\tilde{h}\\
\tilde{u}\\
\tilde{v}
\end{array} \right) = \left( \begin{array}{l}
\hat{h}\\
\hat{u}\\
\hat{v}
\end{array} \right){e^{i(k_1x+k_2y - \omega t)}},
\end{equation*}
where $k_1$ and $k_2$ are the wave numbers, and $\omega$ is the frequency of the perturbations. Without loss of generality, $k_1$ and $k_2$ are taken to be real and positive. We end up with the linear system
\begin{equation*}
\left( {\begin{array}{*{20}{c}}
 -\omega  &  h_0k_1  &  h_0k_2\\
gk_1 + \frac{\alpha-1}{3}gh_0^2k_1(k_1^2+k_2^2)  & -\omega(1 + \frac{\alpha}{3}h_0^2 k_1^2 ) & -\omega \frac{\alpha}{3}h_0^2 k_1 k_2  \\
gk_2 + \frac{\alpha-1}{3}gh_0^2k_2(k_1^2+k_2^2)  & -\omega \frac{\alpha}{3}h_0^2 k_1 k_2    & -\omega(1 + \frac{\alpha}{3}h_0^2 k_2^2 )
\end{array}} \right)\left( {\begin{array}{*{20}{c}}
\hat{h}\\
\hat{u}\\
\hat{v}
\end{array}} \right) = \left( {\begin{array}{*{20}{c}}
0\\
0\\
0
\end{array}} \right),
\end{equation*}
and it has nontrivial solutions provided that the determinant of the coefficient matrix is zero, i.e.
\begin{equation}\label{dispersion relation}
\omega  = {\omega _ \pm } =  \pm |k| \sqrt {g{h_0}(1 + \frac{{\alpha  - 1}}{3}h_0^2{|k|^2}){{(1 + \frac{\alpha }{3}h_0^2{|k|^2})}^{ - 1}}}
\end{equation}
with $|k|=\sqrt{k_1^2+k_2^2}$.
This gives the linear dispersion relation for the modified Green-Naghdi equations \eqref{GN-NFB-2Da} in the flat-bottom case with respect to the trivial solution.    In Figure \ref{Fig:Dispersion}, we compare the linear dispersion relation \eqref{dispersion relation} ($\alpha=1.0$  and $\alpha=1.159$) and that of the full water wave problem in finite depth,
\begin{equation*}
\omega  = {\omega _ \pm } =  \pm \sqrt {g |k| \tanh ({h_0}|k|)},
\end{equation*}
with $g = 1$, $h_0 = 1$.
It can be observed from this figure that compared with the case $\alpha=1.0$, the linear dispersion relation of the modified model with $\alpha=1.159$ shows a better agreement with that of the full water wave problem.

\begin{figure}[H]
\begin{center}
\includegraphics[height=8cm,width=12cm,angle=0]{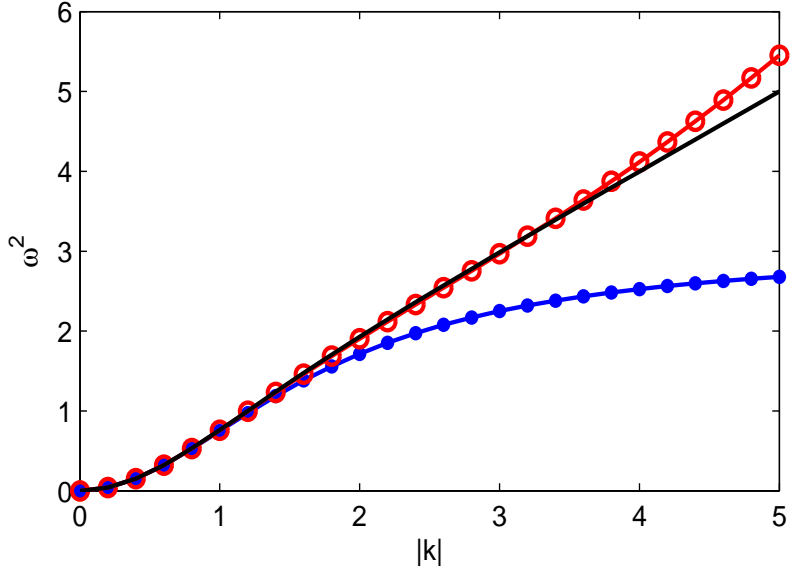}
\caption{Comparison between the linear dispersion relation \eqref{dispersion relation} of the Green-Naghdi equations with $\alpha=1.0$ (blue line with dots) and $\alpha=1.159$ (red line with circles) and the exact linear dispersion relation of the full water wave problem (black solid line), with $g = 1$, $h_0 = 1$.}
\label{Fig:Dispersion}
\end{center}
\end{figure}

\section{Numerical methods for the Green-Naghdi models}
\label{sec:3}
The first difficulty in designing numerical schemes for the Green-Naghdi model comes from the appearance of  mixed spatial and temporal derivatives in the equations. To tackle this, we make a reformulation from the original equations through introducing  auxiliary variables, and the procedure will be given in Section \ref{sec:3.1}. This new model will be adopted for the 2D shallow water wave simulations. In addition, in the case of a variable bottom, these equations admit still-water stationary solutions for the system \eqref{GN-NFB-2Da} which are given by
\begin{equation}\label{Eq:SSW2}
u=0~,\qquad v=0~,\qquad h+b = \mbox{constant}~.
\end{equation}
Numerical errors in discretization will produce  the spurious oscillations which lead to a wrong set of solutions. Another major difficulty in the simulation of shallow water waves has to do with the appearance of dry areas. In particular, this case happens frequently in so many important applications involving rapidly moving interfaces between wet and dry areas, for instance, the wave run-up on beaches or over man-made structures, dam break and tsunami. If no special care is taken, non-physical phenomena may arise and it will lead to a breakdown of computation. Following the work in \cite{Xing2010,LiM2017}, we design in Section \ref{sec:3.2} a high order positivity-preserving well-balanced CDG-FE method for the 2D simulation of the modified Green-Naghdi model in Section \ref{sec:3.1}.

\subsection{Reformulation of modified Green-Naghdi model}
\label{sec:3.1}
In order to remove the mixed spatial and temporal derivatives in the equations, we introduce two new unknown variables $P$ and $Q$ which satisfy:
\begin{eqnarray}\label{Eq:hP}
hP &=& -\left(\frac{\alpha}{3}h^3 u_x + \frac{\alpha}{3}h^3 v_y - \frac{\alpha}{2}h^2 v b_y \right)_x-\left(\frac{\alpha}{2}h^2 v b_x \right)_y \nonumber \\
   & & + h \Big(1+\alpha h_xb_x+\frac{\alpha}{2}h b_{xx} +\alpha b_x^2 \Big)u+h \Big(\alpha h_yb_x+\frac{\alpha}{2}h b_{xy} +\alpha b_x b_y \Big)v~,
\end{eqnarray}
and
\begin{eqnarray}\label{Eq:hQ}
hQ &=& -\left(\frac{\alpha}{3}h^3 u_x + \frac{\alpha}{3}h^3 v_y - \frac{\alpha}{2}h^2ub_x \right)_y-\left(\frac{\alpha}{2}h^2 u b_y \right)_x \nonumber \\
   & & + h \Big(\alpha h_xb_y+\frac{\alpha}{2}h b_{xy} +\alpha b_x b_y \Big)u + h \Big(1+\alpha h_yb_y+\frac{\alpha}{2}h b_{yy} +\alpha b_y^2 \Big)v~.
\end{eqnarray}
Then the system \eqref{GN-NFB-2Da} can be reformulated into a balance law
\begin{equation}\label{GN-NFB-2D-law}
\bU_t+\mathbf{F}(\bU,u,v;b)_x+\mathbf{G}(\bU,u,v;b)_y=\mathbf{S}(\bU,u,v;b),
\end{equation}
where $\mathbf{U}=(h,hP,hQ)^\top$ is the unknown vector,
\begin{eqnarray}\label{GN-NFB-2D-Flux1}
\mathbf{F}(\bU,u,v; b)&=&\left(hu, hPu +hQv +\frac{1}{2}gh^2 -\alpha huvb_xb_y+\frac{1-\alpha}{2}h^2(u^2b_{xx}+v^2b_{yy})\right.\nonumber \\
& & \left.-hv^2(1+\alpha b_y^2)-(\frac{4\alpha-2}{3}h^3u_x^2+\frac{6\alpha-2}{3}h^3u_xv_y+\frac{4\alpha-2}{3}h^3v_y^2)\right.\nonumber  \\
& & \left. +\alpha h^2u(u_x+v_y)b_x+\frac{3}{2}\alpha h^2v( u_x+v_y)b_y -\frac{2}{3}(\alpha-1)h^3u_yv_x +(1-\alpha)h^2uvb_{xy}\right. \nonumber \\
& & \left. -\frac{\alpha-1}{3}gh^3((h+b)_{xx}+(h+b)_{yy})+\frac{\alpha-1}{2}gh^2(b_x(h+b)_x+b_y(h+b)_y), \right.\nonumber \\
& & \left. huv(1+\alpha b_y^2)+\alpha hu^2b_xb_y-\frac{\alpha}{2}h^2u(u_x+v_y)b_y \right)^\top \nonumber \\
\end{eqnarray}
and
\begin{eqnarray}\label{GN-NFB-2D-Flux2}
\mathbf{G}(\bU,u,v; b)&=&\left(hv, huv(1+\alpha b_x^2)+\alpha hv^2b_xb_y-\frac{\alpha}{2}h^2v(u_x+v_y)b_x,  hPu +hQv   \right.  \nonumber \\
& & \left. +\frac{1}{2}gh^2 -\alpha huvb_xb_y+\frac{1-\alpha}{2}h^2(u^2b_{xx}+v^2b_{yy})-hu^2(1+\alpha b_x^2)\right.  \nonumber \\
& & \left. - (\frac{4\alpha-2}{3}h^3u_x^2+\frac{6\alpha-2}{3}h^3u_xv_y+\frac{4\alpha-2}{3}h^3v_y^2)+\frac{3}{2}\alpha h^2u (u_x +v_y)b_x    \right.  \nonumber \\
& & \left. + \alpha h^2v (u_x + v_y )b_y-\frac{2}{3}(\alpha-1)h^3u_yv_x +(1-\alpha)h^2uvb_{xy}\right.\nonumber \\
& & \left. -\frac{\alpha-1}{3}gh^3((h+b)_{xx}+(h+b)_{yy})+\frac{\alpha-1}{2}gh^2(b_x(h+b)_x+b_y(h+b)_y)\right)^\top \nonumber \\
\end{eqnarray}
are the flux terms, and
\begin{eqnarray}\label{GN-NFB-2D-source}
\bS(\bU,u,v; b)&=&\left(0, -ghb_x-\frac{\alpha}{2}h^2 u (u_x+v_y) b_{xx} -\frac{\alpha}{2}h^2 v (u_x+v_y) b_{xy}+(2\alpha-1)h u^2 b_x b_{xx}    \right.  \nonumber \\
& & \left.  +huv((3\alpha-2)b_xb_{xy}+\alpha b_{xx}b_y)+(\alpha-1)h^2(u_x^2+u_xv_y+u_yv_x+v_y^2)b_x \right.  \nonumber \\
& & \left. + \alpha hv^2b_{xy}b_y +(\alpha-1)hv^2b_xb_{yy} +\frac{\alpha-1}{2}gh^2((h+b)_{xx}+(h+b)_{yy})b_x  \right.   \nonumber \\
& & \left. -(\alpha-1)gh(b_x^2(h+b)_x+b_xb_y(h+b)_y), \right.   \nonumber \\
& & \left. -ghb_y-\frac{\alpha}{2}h^2 u (u_x+v_y) b_{xy}-\frac{\alpha}{2}h^2 v (u_x+v_y) b_{yy} +(2\alpha-1)h v^2 b_y b_{yy}\right.  \nonumber \\
& & \left. +huv((3\alpha-2)b_{xy}b_y+\alpha b_xb_{yy})+(\alpha-1)h^2(u_x^2+u_xv_y+u_yv_x+v_y^2)b_y \right.  \nonumber \\
& & \left. +\alpha hu^2b_xb_{xy}+(\alpha-1)hu^2b_{xx}b_y +\frac{\alpha-1}{2}gh^2((h+b)_{xx}+(h+b)_{yy})b_y \right.  \nonumber \\
& & \left. -(\alpha-1)gh(b_xb_y(h+b)_x+b_y^2(h+b)_y)           \right)^\top \nonumber \\
\end{eqnarray}
is the source term.

Based on the new equations, the solution of the modified Green-Naghdi model \eqref{GN-NFB-2Da} amounts to finding the unknowns ${(h, hP, hQ)^\top}$ based on \eqref{GN-NFB-2D-law} and the unknowns ${(u,v)^\top}$  from \eqref{Eq:hP}-\eqref{Eq:hQ}. We remark that the linear dispersive relationship of the reformulated Green-Naghdi model \eqref{Eq:hP}-\eqref{GN-NFB-2D-law} is the same as the one of \eqref{GN-NFB-2Da}.

\subsection{CDG-FE methods}
\label{sec:3.2}

In this section, we develop the numerical method for the solution of the equations \eqref{Eq:hP}-\eqref{GN-NFB-2D-law}. Let $\mathcal{T}^C=\{C_{ij}, \forall i, j \}$ and $\mathcal{T}^D=\{D_{ij}, \forall i, j \}$ define two overlapping meshes for the computational domain  $\Omega=[x_{\min}, x_{\max}] \times [y_{\min}, y_{\max}]$, with $C_{ij}=[x_{i-\frac{1}{2}}, x_{i+\frac{1}{2}}] \times [y_{j-\frac{1}{2}}, y_{j+\frac{1}{2}}]$, $D_{ij}=[x_{i-1}, x_{i}] \times [y_{j-1}, y_{j}]$, $x_{i+\frac{1}{2}}=\frac{1}{2}(x_{i}+x_{i+1})$ and $y_{j+\frac{1}{2}}=\frac{1}{2}(y_{j}+y_{j+1})$, where $\{x_i\}_i$ and $\{y_j\}_j$ are partitions of $[x_{\min}, x_{\max}]$ and $[y_{\min}, y_{\max}]$ respectively. Associated with each mesh, we define the following discrete spaces
\begin{eqnarray*}
\mathcal {V}^C&=&\mathcal {V}^{C,k}=\{{\bf{v}}:{\bf{v}}|_{C_{ij}}\in [P^k(C_{ij})]^3~, \forall\, i, j \}~,\\
\mathcal {V}^D&=&\mathcal {V}^{D,k}=\{{\bf{v}}:{\bf{v}}|_{D_{ij}}\in [P^k(D_{ij})]^3~, \,
\forall \, i, j \}~.
\end{eqnarray*}
To approximate $u$ and $v$, we define two continuous finite element spaces
\begin{eqnarray*}
\mathcal {W}^C&=&\mathcal{W}^{C,k}=\{w:w|_{C_{ij}}\in P^k({C_{ij}})~, \forall\, i,j \text{ and $w$ is continuous} \}~,\\
\mathcal {W}^D&=&\mathcal W^{D,k}=\{w:w|_{D_{ij}}\in P^k({D_{ij}})~, \,
\forall \,i,j \text{ and $w$ is continuous} \}~.
\end{eqnarray*}
We only present the schemes with the forward Euler method for time discretization. High-order time discretizations will be discussed in Section \ref{sec:3.3}. The proposed methods evolve two copies of numerical solution, which are assumed to be available at $t=t_n$, denoted by $\bU^{n,\star}=(h^{n,\star}, (hP)^{n,\star}, (hQ)^{n,\star})^\top \in \mathcal{V}^\star$, and we want to find the solutions at $t=t_{n+1}=t_n+\Delta t_n$.  Only the procedure to update $\bU^{n+1,C}$ will be described.  We project the bottom topography function $b$ into $P^k(C_{ij})$ on $C_{ij}$ (resp. into $P^k(D_{ij})$ on $D_{ij}$) in the $L^2$ sense, and obtain an approximation $b^{C}$ (resp. $b^{D}$) throughout the computational domain.

\subsubsection{Standard CDG-FE method}
\label{sec:3.2.1}

We first apply to \eqref{GN-NFB-2D-law} the standard CDG methods of Liu et al. (\cite{Liu2007}) for space discretization and the forward Euler method for time discretization. That is, we look for $\bU^{n+1,C}=(h^{n+1,C}, (hP)^{n+1,C}, (hQ)^{n+1,C})^\top\in\mathcal{V}^{C,k}$ such that for any $\bV\in\mathcal{V}^{C,k}|_{C_{ij}}$ with any $i$ and $j$,
\begin{eqnarray}\label{Comp-SW-2D-1}
\int_{C_{ij}} \bU^{n+1,C}\cdot \bV dxdy &=& \int_{C_{ij}}\left( \theta \bU^{n,D} + (1-\theta) \bU^{n,C}\right)\cdot \bV dxdy   \nonumber\\
&+&\Delta t_n   \int_{C_{ij}} \left[ \bF(\bU^{n,D},u^{n,D},v^{n,D};b^D)\cdot \bV_x + \bG(\bU^{n,D},u^{n,D},v^{n,D};b^D)\cdot \bV_y \right] dxdy  \nonumber\\
&-&\Delta t_n \int_{y_{j-\frac{1}{2}}}^{y_{j+\frac{1}{2}}}\left[\bF(\bU^{n,D}(x_{i+\frac{1}{2}},y),u^{n,D}(x_{i+\frac{1}{2}},y),
v^{n,D}(x_{i+\frac{1}{2}},y);b^D(x_{i+\frac{1}{2}},y)) \cdot \bV(x^-_{i+\frac{1}{2}},y)\right. \nonumber\\
&-&\left.\bF(\bU^{n,D}(x_{i-\frac{1}{2}},y),u^{n,D}(x_{i-\frac{1}{2}},y),v^{n,D}(x_{i-\frac{1}{2}},y);
b^D(x_{i-\frac{1}{2}},y)) \cdot \bV(x^+_{i-\frac{1}{2}},y) \right] dy \nonumber\\
&-&\Delta t_n \int_{x_{i-\frac{1}{2}}}^{x_{i+\frac{1}{2}}}\left[\bG(\bU^{n,D}(x,y_{j+\frac{1}{2}}),u^{n,D}(x,y_{j+\frac{1}{2}}),
v^{n,D}(x,y_{j+\frac{1}{2}});b^D(x,y_{j+\frac{1}{2}})) \cdot \bV(x,y^-_{j+\frac{1}{2}})\right. \nonumber\\
&-&\left.\bG(\bU^{n,D}(x,y_{j-\frac{1}{2}}),u^{n,D}(x,y_{j-\frac{1}{2}}),v^{n,D}(x,y_{j-\frac{1}{2}});
b^D(x,y_{j-\frac{1}{2}})) \cdot \bV(x,y^+_{j-\frac{1}{2}}) \right] dx \nonumber\\
&+& \Delta t_n\int_{C_{ij}}\bS(\bU^{n,D},u^{n,D},v^{n,D}; b^{D}) \cdot \bV dxdy~.
\end{eqnarray}
Here $\theta=\Delta t_n/\tau \in [0, 1]$ with $\tau$ being the maximal time step allowed by the CFL restriction (\cite{Liu2007}). In general, this numerical scheme does not necessarily maintain the still-water stationary solution \eqref{Eq:SSW2} and preserve the non-negativity of water depth.

Once $\mathbf{U}^{n+1,C}$ is available, we can obtain $u^{n+1,C}$ and $v^{n+1,C}$by applying a continuous finite element method to \eqref{Eq:hP} and \eqref{Eq:hQ}: look for $u^{n+1,C}, v^{n+1,C} \in \widetilde{W}^{C,k}$ such that for any $\widehat{u}, \widehat{v} \in \widehat{W}^{C,k}$,
\begin{eqnarray}
\label{FE1}
& &\int_\Omega \left(\frac{\alpha}{3}(h^{n+1,C})^3(u^{n+1,C})_x+\frac{\alpha}{3}(h^{n+1,C})^3(v^{n+1,C})_y
-\frac{\alpha}{2}(h^{n+1,C})^2v^{n+1,C}(b^C)_y \right)\widehat{u}_xdxdy \nonumber\\
&+&\int_\Omega \left(\frac{\alpha}{2}(h^{n+1,C})^2v^{n+1,C}(b^C)_x \right)\widehat{u}_ydxdy
+\int_\Omega f(h^{n+1,C},u^{n+1,C},v^{n+1,C};b^C)\widehat{u}dxdy \nonumber\\
&=&\int_\Omega (hP)^{n+1,C}\widehat{u}dxdy,
\end{eqnarray}
\begin{eqnarray}
\label{FE2}
& &\int_\Omega \left(\frac{\alpha}{3}(h^{n+1,C})^3(u^{n+1,C})_x+\frac{\alpha}{3}(h^{n+1,C})^3(v^{n+1,C})_y
-\frac{\alpha}{2}(h^{n+1,C})^2u^{n+1,C}(b^C)_x \right)\widehat{v}_ydxdy \nonumber\\
&+&\int_\Omega \left(\frac{\alpha}{2}(h^{n+1,C})^2u^{n+1,C}(b^C)_y \right)\widehat{v}_xdxdy
+\int_\Omega g(h^{n+1,C},u^{n+1,C},v^{n+1,C};b^C)\widehat{v}dxdy \nonumber\\
&=&\int_\Omega (hQ)^{n+1,C}\widehat{v}dxdy,
\end{eqnarray}
where
\begin{eqnarray}
f(h^{n+1,C},u^{n+1,C},v^{n+1,C};b^C)&=&h^{n+1,C}\left (1+\alpha(h^{n+1,C})_x(b^C)_x+\frac{\alpha}{2}h^{n+1,C}(b^C)_{xx}+\alpha(b^C)_{x}^2 \right)u^{n+1,C} \nonumber\\
&+&h^{n+1,C}\left (\alpha(h^{n+1,C})_y(b^C)_x+\frac{\alpha}{2}h^{n+1,C}(b^C)_{xy}+\alpha (b^C)_x (b^C)_y \right)v^{n+1,C}, \nonumber \\
\end{eqnarray}
\begin{eqnarray}
g(h^{n+1,C},u^{n+1,C},v^{n+1,C};b^C)&=&h^{n+1,C}\left (\alpha(h^{n+1,C})_x(b^C)_y+\frac{\alpha}{2}h^{n+1,C}(b^C)_{xy}+\alpha (b^C)_x (b^C)_y \right)u^{n+1,C}, \nonumber \\
&+&h^{n+1,C}\left (1+\alpha(h^{n+1,C})_y(b^C)_y+\frac{\alpha}{2}h^{n+1,C}(b^C)_{yy}+\alpha(b^C)_{y}^2 \right)v^{n+1,C}, \nonumber \\
\end{eqnarray}
and $\widetilde{W}^{C,k}$, $\widehat{W}^{C,k}$ are variants of $\mathcal{W}^{C,k}$ with consideration of the boundary conditions (\cite{LiM2014}).

\begin{remark}
When the bottom is flat ($b=\mbox{constant}$), the unique solvability of this FE method can be obtained in a straightforward manner if $h \geq h_0 >0$ and $\alpha \geq \alpha_0>0$.  When the bottom is not flat, the unique solvability of this FE scheme is more difficult to be determined. Even though $h \geq h_0 >0$ and $\alpha \geq \alpha_0>0$, a constraint condition for the $b$ as in the 1D case in \cite{LiM2014} is needed for the unique solvability of the FE method. Due to the complexity of the 2D case, the condition can not be obtained explicitly. However, except for the tests in Sections 4.2(Case B) and 4.3 which involve a dry area or a near dry area, the FE system of the other numerical applications in Section 4 (satisfy $h \geq h_0 >0$ and $\alpha \geq \alpha_0>0$) was found to be uniquely solvable in numerics.
\end{remark}

\begin{remark}
For the cases in Sections 4.2(Case B) and 4.3 which involve a dry area or a near dry area, the FE equation is ill-conditioned, so we shall evaluate the velocity $u$ using an approximation technique. We solve the FE equation on the cells with $h^{n+1,C} \geq h_0 >0$, while for other cells, we omit the high order terms in \eqref{Eq:hP}-\eqref{Eq:hQ} and get
\begin{eqnarray}\label{Eq:hP-1}
hP & \simeq &  h \Big(1 +\alpha b_x^2 \Big)u+\alpha h b_x b_y v~,\\
hQ & \simeq &  \alpha h b_x b_y u + h \Big(1+\alpha b_y^2 \Big)v~.
\end{eqnarray}
Then the velocity $u$ and $v$ can be evaluated by
\begin{eqnarray}\label{Eq:hP-2}
u & \simeq &  \frac{1+\alpha b_y^2}{1 +\alpha b_x^2 + \alpha b_y^2} \cdot \frac{hP}{h} - \frac{\alpha b_x b_y}{1 +\alpha b_x^2 + \alpha b_y^2} \cdot \frac{hQ}{h},\\ \label{Eq:hP-3}
v & \simeq &  \frac{1+\alpha b_x^2}{1 +\alpha b_x^2 + \alpha b_y^2} \cdot \frac{hQ}{h} - \frac{\alpha b_x b_y}{1 +\alpha b_x^2 + \alpha b_y^2} \cdot \frac{hP}{h},
\end{eqnarray}
To deal with the singularity in \eqref{Eq:hP-2}-\eqref{Eq:hP-3}, we employ the regularization technique presented in \cite{Kurganov2007} and thus we have:
\begin{eqnarray}\label{Eq:hP-4}
u & \simeq &  \frac{1+\alpha b_y^2}{1 +\alpha b_x^2 + \alpha b_y^2} \cdot \widetilde{P} - \frac{\alpha b_x b_y}{1 +\alpha b_x^2 + \alpha b_y^2} \cdot \widetilde{Q} ,\\ \label{Eq:hP-5}
v & \simeq &  \frac{1+\alpha b_x^2}{1 +\alpha b_x^2 + \alpha b_y^2} \cdot \widetilde{Q}  - \frac{\alpha b_x b_y}{1 +\alpha b_x^2 + \alpha b_y^2} \cdot \widetilde{P} ,
\end{eqnarray}
where
\begin{eqnarray}\label{Eq:hP-6}
\widetilde{P}=\frac{\sqrt{2}h (hP)}{\sqrt{h^4+\max(h^4,\varepsilon)}}, \widetilde{Q}=\frac{\sqrt{2}h (hQ)}{\sqrt{h^4+\max(h^4,\varepsilon)}}
\end{eqnarray}
with $\varepsilon=\min((\Delta x)^{4},(\Delta y)^{4})$. In the computation, $h_0=\max((\Delta x)^{k+1},(\Delta y)^{k+1})$ for $k=1,2$.
\end{remark}

\subsubsection{Well-balanced CDG-FE method}
\label{sec:3.2.2}

In this subsection, we propose a family of high order well-balanced CDG-FE schemes for the model \eqref{Eq:hP}-\eqref{GN-NFB-2D-law}, which exactly preserves the still-water steady state solution \eqref{Eq:SSW2}. The well-balance property can be achieved  by adding some terms to the scheme \eqref{Comp-SW-2D-1},
\begin{eqnarray}\label{Comp-WB-SW-2D-0}
& &\int_{C_{ij}} \bU_h^{n+1,C}\cdot \bV dxdy = \mbox{right-hand side of \eqref{Comp-SW-2D-1}}  + \btU(b_h^{C}, b_h^{D},\bV)\nonumber\\
&+& \Delta t_n \int_{y_{j-\frac{1}{2}}}^{y_{j+\frac{1}{2}}} \left[\btS_1(\bU_h^{n,D}; b_h^{D}(x_i^+,y))-\btS_1(\bU_h^{n,D}; b_h^{D}(x_i^-,y))\right] \cdot \bV(x_i,y) dy   \nonumber\\
&+&\Delta t_n \int_{x_{i-\frac{1}{2}}}^{x_{i+\frac{1}{2}}} \left[\btS_2(\bU_h^{n,D}; b_h^{D}(x,y_j^+))-\btS_2(\bU_h^{n,D}; b_h^{D}(x,y_j^-))\right] \cdot \bV(x,y_j) dx~,\nonumber
\end{eqnarray}
where the correction terms are given by
\begin{equation}
\label{Solution-Correction-1}
\btU(b_h^{C}, b_h^{D}, \bV)= \theta\int_{C_{ij}}\left( b_h^{D}-b_h^D, 0, 0 \right)^\top\cdot \bV dxdy~,
\end{equation}
\begin{equation}
\label{Source-Correction-1}
\btS_1(\bU_h^{n,D}; b_h^{D})=\left(0, \frac{g}{2}(b_h^D)^2-\gamma^{n,D}_{ij} g b_h^D, 0 \right)^\top~,
\end{equation}
\begin{equation}
\label{Source-Correction-2}
\btS_2(\bU_h^{n,D}; b_h^{D})=\left(0, 0, \frac{g}{2}(b_h^D)^2-\gamma^{n,D}_{ij} g b_h^D\right)^\top~.
\end{equation}
Here, $\gamma^{n,D}_{ij}$ is a special constant which represents the average value of the water surface $\eta_h^{n,D}=h_h^{n,D}+b_h^{D}$ in the element  $C_{ij}$. Particularly, we can take
\[
\begin{array}{lclcl}
  \gamma^{n,D}_{ij} &=&  \frac{1}{4}\left[\eta_h^{n,D}(x_{i+\frac{1}{2}},y_{j+\frac{1}{2}}) +\eta_h^{n,D}(x_{i-\frac{1}{2}},y_{j+\frac{1}{2}})\right. \\
  &+&\left. \eta_h^{n,D}(x_{i+\frac{1}{2}},y_{j-\frac{1}{2}}) + \eta_h^{n,D}(x_{i-\frac{1}{2}}, y_{j-\frac{1}{2}})\right]~.
\end{array}
\]
With the following decomposition of the source term
\begin{eqnarray}\label{decom-2D}
\bS(\bU_h^{n,D},u_h^{n,D},v_h^{n,D}; b_h^{D})&=& \left(0,-g(\eta_h^{n,D}-b_h^D)(b_h^D)_x, -g(\eta_h^{n,D}-b_h^D)(b_h^D)_y \right)^{\top} + \mbox{the remaining items} \nonumber\\
&=& \left(0,\frac{g}{2}(b_h^D)^2-g\gamma_{ij}^{n,D}b_h^D, 0 \right)_x^{\top}  + \left(0, 0, \frac{g}{2}(b_h^D)^2-g\gamma_{ij}^{n,D}b_h^D \right)_y^{\top} \nonumber\\
&-& \left(0, g(\eta_h^{n,D}-\gamma_{ij}^{n,D})(b_h^D)_x, g(\eta_h^{n,D}-\gamma_{ij}^{n,D})(b_h^D)_y \right)^{\top} \nonumber\\
&+& \mbox{the remaining items},
\end{eqnarray}
the scheme \eqref{Comp-WB-SW-2D-0} can be rewritten as
\begin{eqnarray}\label{Comp-WB-SW-2D-1}
& &\int_{C_{ij}} \bU_h^{n+1,C}\cdot \bV dxdy \nonumber\\
&=& \int_{C_{ij}}\left( \theta \bU_h^{n,D} + (1-\theta) \bU_h^{n,C}\right)\cdot \bV dxdy + \btU(b_h^{C}, b_h^{D}, \bV)\nonumber\\
 &+&\Delta t_n   \int_{C_{ij}} \left[ \bF(\bU_h^{n,D},u_h^{n,D},v_h^{n,D};b_h^D)\cdot \bV_x + \bG(\bU_h^{n,D},u_h^{n,D},v_h^{n,D};b_h^D)\cdot \bV_y \right] dxdy  \nonumber\\
&-&\Delta t_n \int_{y_{j-\frac{1}{2}}}^{y_{j+\frac{1}{2}}}\left[\bF(\bU_h^{n,D}(x_{i+\frac{1}{2}},y),u_h^{n,D}(x_{i+\frac{1}{2}},y),
v_h^{n,D}(x_{i+\frac{1}{2}},y);b_h^D(x_{i+\frac{1}{2}},y)) \cdot \bV(x^-_{i+\frac{1}{2}},y)\right. \nonumber\\
&-&\left.\bF(\bU_h^{n,D}(x_{i-\frac{1}{2}},y),u_h^{n,D}(x_{i-\frac{1}{2}},y),v_h^{n,D}(x_{i-\frac{1}{2}},y);
b_h^D(x_{i-\frac{1}{2}},y)) \cdot \bV(x^+_{i-\frac{1}{2}},y) \right] dy \nonumber\\
&-&\Delta t_n \int_{x_{i-\frac{1}{2}}}^{x_{i+\frac{1}{2}}}\left[\bG(\bU_h^{n,D}(x,y_{j+\frac{1}{2}}),u_h^{n,D}(x,y_{j+\frac{1}{2}}),
v_h^{n,D}(x,y_{j+\frac{1}{2}});b_h^D(x,y_{j+\frac{1}{2}})) \cdot \bV(x,y^-_{j+\frac{1}{2}})\right. \nonumber\\
&-&\left.\bG(\bU_h^{n,D}(x,y_{j-\frac{1}{2}}),u_h^{n,D}(x,y_{j-\frac{1}{2}}),v_h^{n,D}(x,y_{j-\frac{1}{2}});
b_h^D(x,y_{j-\frac{1}{2}})) \cdot \bV(x,y^+_{j-\frac{1}{2}}) \right] dx \nonumber\\
&-& \Delta t_n  \int_{C_{ij}}\left(0, g(\eta_h^{n,D}-\gamma_{ij}^{n,D})(b_h^D)_x, g(\eta_h^{n,D}-\gamma_{ij}^{n,D})(b_h^D)_y \right)^{\top} \cdot \bV dxdy  \nonumber\\
&-& \Delta t_n  \int_{C_{ij}}\left(0, \frac{g}{2}(b_h^D)^2-g\gamma_{ij}^{n,D})b_h^D,0 \right)^{\top} \cdot \bV_x dxdy  \nonumber\\
&-& \Delta t_n  \int_{C_{ij}}\left(0, 0, \frac{g}{2}(b_h^D)^2-g\gamma_{ij}^{n,D})b_h^D \right)^{\top} \cdot \bV_y dxdy  \nonumber\\
&+& \Delta t_n  \int_{y_{j-\frac{1}{2}}}^{y_{j+\frac{1}{2}}} \left(0, \frac{g}{2}(b_h^D(x_{i+\frac{1}{2}},y))^2-g\gamma_{ij}^{n,D})b_h^D(x_{i+\frac{1}{2}},y) ,0 \right)^{\top} \cdot \bV(x_{i+\frac{1}{2}}^-,y) dy \nonumber\\
&-& \Delta t_n  \int_{y_{j-\frac{1}{2}}}^{y_{j+\frac{1}{2}}} \left(0, \frac{g}{2}(b_h^D(x_{i-\frac{1}{2}},y))^2-g\gamma_{ij}^{n,D})b_h^D(x_{i-\frac{1}{2}},y) ,0 \right)^{\top} \cdot \bV(x_{i-\frac{1}{2}}^+,y) dy \nonumber\\
&+& \Delta t_n \int_{x_{i-\frac{1}{2}}}^{x_{i+\frac{1}{2}}} \left(0 ,0, \frac{g}{2}(b_h^D(x,y_{j+\frac{1}{2}}))^2-g\gamma_{ij}^{n,D})b_h^D(x,y_{j+\frac{1}{2}}) \right)^{\top} \cdot \bV(x,y_{j+\frac{1}{2}}^-) dx \nonumber\\
&-& \Delta t_n  \int_{x_{i-\frac{1}{2}}}^{x_{i+\frac{1}{2}}} \left(0,0,\frac{g}{2}(b_h^D(x,y_{j-\frac{1}{2}}))^2-g\gamma_{ij}^{n,D})b_h^D(x,y_{j-\frac{1}{2}}) \right)^{\top} \cdot \bV(x,y_{j-\frac{1}{2}}^+) dx \nonumber\\
&+& \mbox{the remaining items}~.
\end{eqnarray}

\begin{prop}
\label{Th3.1}
The numerical scheme, defined in \eqref{Comp-WB-SW-2D-1}, \eqref{FE1} and \eqref{FE2} and their counterparts for  $\bU_h^{n+1,D}$, $u_h^{n+1,D}$ and $v_h^{n+1,D}$ ,  to solve the 2D Green-Naghdi model \eqref{Eq:hP}-\eqref{GN-NFB-2D-law} is well-balanced, in the sense that it preserves the still-water stationary solution (\ref{Eq:SSW2}).
\end{prop}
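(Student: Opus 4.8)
The plan is to prove well-balancedness by induction over a single forward-Euler step: assuming the still-water state \eqref{Eq:SSW2} holds for both copies at $t_n$, I would show it is reproduced exactly at $t_{n+1}$. By the symmetry between the two overlapping meshes it suffices to treat the update of $\bU_h^{n+1,C}$ together with the companion FE solve for $u_h^{n+1,C}, v_h^{n+1,C}$, so I start from $u_h^{n,\star}=v_h^{n,\star}=0$ and $\eta_h^{n,\star}=h_h^{n,\star}+b_h^{\star}\equiv\eta_0$ (a constant) for $\star\in\{C,D\}$. Feeding $u=v=0$ into \eqref{Eq:hP}--\eqref{Eq:hQ} forces $(hP)_h^{n,\star}=(hQ)_h^{n,\star}=0$, so the evolved vector also starts at rest, and the nodal average satisfies $\gamma_{ij}^{n,D}=\eta_0$. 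Inspecting the data \eqref{GN-NFB-2D-Flux1}--\eqref{GN-NFB-2D-source}, every term carries either a velocity factor or a derivative of the surface $\eta=h+b$; since $u=v=0$ and $\eta\equiv\eta_0$ make $(h+b)_x,(h+b)_y,(h+b)_{xx},(h+b)_{yy}$ vanish, the fluxes collapse to the hydrostatic contributions $\bF=(0,\tfrac12 gh^2,0)^\top$, $\bG=(0,0,\tfrac12 gh^2)^\top$ and the source to $\bS=(0,-ghb_x,-ghb_y)^\top$. In particular every ``remaining item'' in the decomposition \eqref{decom-2D} and in \eqref{Comp-WB-SW-2D-1} drops out, and the residual physical source $g(\eta_h^{n,D}-\gamma_{ij}^{n,D})(b_h^D)_{x,y}$ vanishes because $\eta_h^{n,D}=\gamma_{ij}^{n,D}=\eta_0$.

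For the first (mass) component I take $\bV=(V_1,0,0)^\top$. All flux and source contributions there are zero, so the update retains only the averaged state plus the topography correction $\btU$ of \eqref{Solution-Correction-1}. Using $h_h^{n,D}=\eta_0-b_h^D$ and $h_h^{n,C}=\eta_0-b_h^C$, the average $\theta h_h^{n,D}+(1-\theta)h_h^{n,C}$ plus the first-slot correction $\theta(b_h^D-b_h^C)$ collapses to $\eta_0-b_h^C$; testing against all $V_1\in P^k(C_{ij})$ then yields $h_h^{n+1,C}=\eta_0-b_h^C$, i.e. $h_h^{n+1,C}+b_h^C\equiv\eta_0$. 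This is exactly the role of $\btU$: it compensates the mismatch between the two $L^2$-projections $b_h^C$ and $b_h^D$ of the bottom.

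The crux is the second and third components, say $\bV=(0,V_2,0)^\top$ for $hP$. Here the rest-state previous-time term and the physical source both vanish, leaving the hydrostatic flux $\tfrac12 gh^2$ tested against $(V_2)_x$ and across the $x$-interfaces, together with the well-balanced correction pieces built from $\tfrac g2 (b_h^D)^2-\gamma_{ij}^{n,D} g b_h^D$ in the volume and interface integrals of \eqref{Comp-WB-SW-2D-1}. The key algebraic identity is that, under $h=\eta_0-b_h^D$ and $\gamma_{ij}^{n,D}=\eta_0$, the combined integrand $\tfrac g2 h^2-\bigl(\tfrac g2 (b_h^D)^2-\gamma_{ij}^{n,D} g b_h^D\bigr)$ collapses to the constant $\tfrac g2 \eta_0^2$, both in the cell and on each edge. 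The surviving volume term $\tfrac g2\eta_0^2\int_{C_{ij}}(V_2)_x\,dxdy$ and the matching interface terms then cancel by the divergence theorem on the rectangle, so the whole right-hand side is zero and $(hP)_h^{n+1,C}=0$; the identical computation in $y$ (using $\btS_2$ and $G_3=\tfrac12 gh^2$) gives $(hQ)_h^{n+1,C}=0$. Verifying that the hydrostatic flux and the tailored correction \eqref{Source-Correction-1}--\eqref{Source-Correction-2} combine into a spatial constant that integrates to zero is the main obstacle, since it is precisely where the exact coefficients of the correction terms and the choice of $\gamma_{ij}^{n,D}$ must be matched.

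Finally I would close the loop through the elliptic solve: with $(hP)_h^{n+1,C}=(hQ)_h^{n+1,C}=0$ the right-hand sides of \eqref{FE1}--\eqref{FE2} vanish, and since every term on the left is linear (or bilinear) in $(u,v)$, the pair $u_h^{n+1,C}=v_h^{n+1,C}=0$ solves the FE system; invoking the unique solvability noted in the Remark (for $h\ge h_0>0$, $\alpha\ge\alpha_0>0$) identifies this as the solution. Thus the rest state is reproduced on mesh $C$, and the symmetric computation does the same on mesh $D$, completing the induction.
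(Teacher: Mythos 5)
Your proof is correct and takes essentially the same route as the paper: the paper's own proof is a one-line appeal to mathematical induction on the time step $n$ as in the 1D Proposition of \cite{LiM2014}, and your argument is exactly that induction written out in 2D --- hydrostatic collapse of the fluxes and source at rest, the key identity $\tfrac{g}{2}h^2-\bigl(\tfrac{g}{2}(b_h^D)^2-g\gamma_{ij}^{n,D}b_h^D\bigr)=\tfrac{g}{2}\eta_0^2$ under $h=\eta_0-b_h^D$, $\gamma_{ij}^{n,D}=\eta_0$, cancellation of volume and interface terms by the divergence theorem, and the zero solution of the linear FE system \eqref{FE1}--\eqref{FE2}. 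Note that you correctly interpret the correction \eqref{Solution-Correction-1} as $\theta\int_{C_{ij}}(b_h^D-b_h^C,0,0)^\top\cdot\bV\,dxdy$ (silently repairing the paper's typo $b_h^D-b_h^D$), which is exactly what makes the mass component close.
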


\begin{proof}
The proof can be easily obtained by the mathematical induction with time step $n$ as the Proposition in \cite{LiM2014}.
\end{proof}

\subsubsection{Positivity-preserving CDG-FE method}
\label{sec:3.2.3}
We now discuss the positivity-preserving CDG-FE method  for  \eqref{Eq:hP}-\eqref{GN-NFB-2D-law}. Firstly, let $\hat L_i^{1,x}=\{ \hat x_i^{1,\beta}, \beta=1,2,...,{\hat N} \}$ and $\hat L_i^{2,x}=\{\hat x_i^{2, \beta}, \beta=1,2,...,{\hat N}\}$ be the Legendre Gauss-Lobatto quadrature points on $[x_{i-\frac{1}{2}}, x_{i}]$ and $[x_{i}, x_{i+\frac{1}{2}}]$ respectively, while $\hat L_j^{1,y}=\{ \hat y_j^{1,\beta}, \beta=1,2,...,{\hat N} \}$ and $\hat L_j^{2,y}=\{\hat y_j^{2, \beta}, \beta=1,2,...,{\hat N}\}$ represent the Legendre Gauss-Lobatto quadrature points on $[y_{j-\frac{1}{2}}, y_{j}]$ and $[y_{j}, y_{j+\frac{1}{2}}]$ respectively, $\forall i, j$. The corresponding quadrature weights on the reference element $[-\frac{1}{2}, \frac{1}{2}]$ are $\hat \omega_{\beta}, \beta =1, 2, ... , {\hat N}$, and $\hat N$ is chosen such that $2 \hat N-3 \geq k$. In addition, let $L_i^{1,x}=\{ x_i^{1,\alpha}, \alpha=1,2,...,N \}$ and $L_i^{2,x}=\{x_i^{2, \alpha}, \alpha=1,2,...,N\}$ denote the Gaussian quadrature points on $[x_{i-\frac{1}{2}}, x_{i}]$ and $[x_{i}, x_{i+\frac{1}{2}}]$ respectively, while $L_j^{1,y}=\{ y_j^{1,\alpha}, \alpha=1,2,...,N \}$ and $L_j^{2,y}=\{y_j^{2, \alpha}, \alpha=1,2,...,N\}$ represent the Gaussian quadrature points on $[y_{j-\frac{1}{2}}, y_{j}]$ and $[y_{j}, y_{j+\frac{1}{2}}]$ respectively. The corresponding quadrature weights $\omega_{\alpha}, \alpha=1, 2, ... , N$ are distributed on the interval $[-\frac{1}{2}, \frac{1}{2}]$ and $N$ is chosen such that the Gaussian quadrature is exact for the integration of univariate polynomials of degree $2k+1$. Define $L_{i,j}^{l,m}=(L_i^{l,x}\otimes \hat L_j^{m,y})\cup (\hat L_i^{l,x}\otimes L_j^{m,y})$ with $l,m=1,2$. By taking the test function $\bV=(\frac{1}{\Delta x\Delta y}, 0, 0)^\top$, we get the equation satisfied by the cell average of $h^{n,C}$,
\begin{eqnarray}\label{gene-cell-2D}
\bar h_{ij}^{n+1,C}&=& (1-\theta) \bar h_{ij}^{n,C} + \frac{\theta}{\Delta x \Delta y}  \int_{C_{ij}}h^{n,D}dxdy   \nonumber\\
&-&\frac{\Delta t_n}{\Delta x \Delta y} \int_{y_{j-\frac{1}{2}}}^{y_{j+\frac{1}{2}}}\left[h^{n,D}(x_{i+\frac{1}{2}},y)u^{n,D}(x_{i+\frac{1}{2}},y)-
h^{n,D}(x_{i-\frac{1}{2}},y)u^{n,D}(x_{i-\frac{1}{2}},y)\right] dy \nonumber\\
&-&\frac{\Delta t_n}{\Delta x \Delta y} \int_{x_{i-\frac{1}{2}}}^{x_{i+\frac{1}{2}}}\left[h^{n,D}(x,y_{j+\frac{1}{2}})v^{n,D}(x,y_{j+\frac{1}{2}})-
h^{n,D}(x,y_{j-\frac{1}{2}})v^{n,D}(x,y_{j-\frac{1}{2}})\right] dx~,
\end{eqnarray}
where $\bar h_{ij}^{n,C}$ denotes the cell average of the CDG solution $h^C$ on $C_{ij}$ at time $t_n$.

In the numerical implementation, the definite integrals in the intervals $[x_{i-\frac{1}{2}},x_{i+\frac{1}{2}}]$ and $[y_{j-\frac{1}{2}},y_{j+\frac{1}{2}}]$ are evaluated by applying the Gaussian quadrature rule described above to each half of the interval (also see Section 3.2 in \cite{Cheng2012}), the scheme \eqref{gene-cell-2D} becomes
\begin{eqnarray}\label{cell1-2D-1}
\bar{h}_{ij}^{n+1,C}
&=& (1-\theta) \bar{h}_{ij}^{n,C} + \frac{\theta}{\Delta x\Delta y}\int_{C_{ij}} h^{n,D} dxdy\nonumber\\
&-&\frac{\Delta t_n}{2\Delta x}\sum_{m=1}^{2}\sum_{\alpha=1}^{N}\omega_{\alpha}\left[ h^{n,D}\left(\hat x_i^{2,\hat N},y_j^{m,\alpha}\right)u^{n,D}\left(\hat x_i^{2,\hat N},y_j^{m,\alpha}\right)\right. \nonumber\\
&-&\left.h^{n,D}\left(\hat x_i^{1,1},y_j^{m,\alpha}\right)u^{n,D}\left(\hat x_i^{1,1},y_j^{m,\alpha}\right) \right] \nonumber\\
&-&\frac{\Delta t_n}{2\Delta y}\sum_{l=1}^{2}\sum_{\alpha=1}^{N}\omega_{\alpha}\left[ h^{n,D}\left(x_i^{l,\alpha}, \hat y_j^{2,\hat N}\right)v^{n,D}\left(x_i^{l,\alpha}, \hat y_j^{2,\hat N}\right)\right. \nonumber\\
&-&\left.h^{n,D}\left(x_i^{l,\alpha},\hat y_j^{1,1}\right)v^{n,D}\left(x_i^{l,\alpha}, \hat y_j^{1,1}\right) \right],
\end{eqnarray}
herein, we used $\hat x_i^{1,1}=x_{i-\frac{1}{2}}, \hat x_i^{2,\hat N}=x_{i+\frac{1}{2}}, \hat y_j^{1,1}=y_{j-\frac{1}{2}}, \hat y_j^{2,\hat N}=y_{j+\frac{1}{2}}$. Now we have the following result.

\begin{prop}
\label{Th3.2}
For any given $n\ge 0$, we assume $\bar h_{ij}^{n,C}\ge 0$ and $\bar h_{ij}^{n,D}\ge 0$, $\forall i, j$.
Consider the scheme in \eqref{cell1-2D-1} and its counterpart for $\bar h_{ij}^{n+1,D}$, if $h^C(x,y,t_n)\ge 0$ and $h^D(x,y,t_n)\ge 0$, $\forall (x,y) \in L_{i,j}^{l,m}, \forall i,j$ with $l,m=1,2$, then $\bar h_{ij}^{n+1, C} \ge 0$ and $\bar h_{ij}^{n+1, D} \ge 0$, $\forall i,j$, under the CFL condition
\begin{equation}\label{gene-CFL-con-2d}
\lambda_x a_x +\lambda_y a_y \leq \frac{1}{4} \theta  \hat \omega_1~,
\end{equation}
where  $\lambda_x=\Delta t_n/\Delta x$, $\lambda_y=\Delta t_n/\Delta y$, $a_x=\max(\|u^{n,C}\|_\infty, \|u^{n,D}\|_\infty)$, $a_y=\max(\|v_h^{n,C}\|_\infty, \|v_h^{n,D}\|_\infty)$.
\end{prop}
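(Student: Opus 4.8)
The plan is to carry out the positivity-preserving argument of Zhang--Shu, adapted to the overlapping-mesh central DG setting (\cite{Xing2010,Cheng2012}), entirely at the level of the cell-average identity \eqref{cell1-2D-1}. By the symmetry between the two meshes I would treat only the update of $\bar h_{ij}^{n+1,C}$, the bound for $\bar h_{ij}^{n+1,D}$ following verbatim after exchanging the roles of $C$ and $D$. First I would record that $\theta=\Delta t_n/\tau\in[0,1]$, so that the term $(1-\theta)\bar h_{ij}^{n,C}$ in \eqref{cell1-2D-1} is already nonnegative under the hypothesis $\bar h_{ij}^{n,C}\ge 0$. It therefore suffices to prove that the remaining part of \eqref{cell1-2D-1}, namely
\[
A:=\frac{\theta}{\Delta x\Delta y}\int_{C_{ij}}h^{n,D}\,dxdy
\]
minus the two interface-flux sums, is nonnegative.

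The heart of the proof is to express the volume term $A$ through exactly those point values of $h^{n,D}$ that enter the flux sums. Since the $D$-solution is a single polynomial of degree $k$ on each of the four quadrants of $C_{ij}$ cut out by the cell center $(x_i,y_j)$, which is its only line of discontinuity inside $C_{ij}$, I would integrate quadrant by quadrant. On each quadrant I would write the exact integral as a convex combination, with a parameter $s\in[0,1]$, of two tensor-product rules: Gauss points in $x$ with Gauss--Lobatto points in $y$, and Gauss--Lobatto points in $x$ with Gauss points in $y$. Both rules are exact for the integrand because $N$ and $\hat N$ were chosen so that the Gauss rule is exact to degree $2k+1$ and the Gauss--Lobatto rule to degree at least $k$. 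The crucial feature is that the Gauss--Lobatto abscissae include the edge points $x_{i\pm\frac12}$ and $y_{j\pm\frac12}$, sampled at the transverse Gauss nodes $y_j^{m,\alpha}$ and $x_i^{l,\alpha}$ respectively, which are precisely the points at which the flux sums in \eqref{cell1-2D-1} are evaluated; all remaining abscissae are interior nodes lying in $L_{i,j}^{l,m}$ and carrying positive weights, so by hypothesis their contributions are nonnegative.

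After this decomposition the edge contributions of $A$ can be grouped with the matching interface fluxes. Here I would invoke the consistency fact, special to this staggered construction, that the polynomial piece furnishing $h^{n,D}(x_{i+\frac12},\cdot)$ in the quadrature of the quadrant $[x_i,x_{i+\frac12}]\times[y_{j-\frac12},y_j]$ is the same $D$-cell polynomial that supplies the flux value there, so the two occurrences of $h^{n,D}$ can be factored cleanly. Each grouped pair then takes the one-dimensional form $\omega_\alpha\,h^{n,D}\big[\tfrac14\theta s\,\hat\omega_1\mp\tfrac12\lambda_x u^{n,D}\big]$ at the $x$-edges, and the analogous expression with $s\mapsto 1-s$, $\lambda_x\mapsto\lambda_y$, $u\mapsto v$ at the $y$-edges, which is nonnegative as soon as its bracket is. Since $|u^{n,D}|\le a_x$ and $|v^{n,D}|\le a_y$, nonnegativity of the two brackets is ensured by choosing $s$ to balance the two directions, and the resulting joint restriction on $s$ is guaranteed by the CFL condition \eqref{gene-CFL-con-2d}; as $h^{n,D}\ge 0$ at every node used, $A$ minus the fluxes is then a sum of nonnegative terms, giving $\bar h_{ij}^{n+1,C}\ge 0$. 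I expect the main obstacle to be exactly this bookkeeping step: verifying that the edge nodes extracted from the volume integral coincide, both as points and as the correct one-sided polynomial traces, with those appearing in the flux sums, so that the quadrature weights and the CFL constant line up, rather than any single estimate.
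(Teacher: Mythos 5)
Your proof is correct and follows essentially the same route as the paper's: the paper likewise splits the volume term $\frac{\theta}{\Delta x\Delta y}\int_{C_{ij}}h^{n,D}\,dxdy$ into two copies evaluated quadrant by quadrant with the two mixed tensor rules (Gauss--Lobatto in $x$ $\otimes$ Gauss in $y$, and Gauss in $x$ $\otimes$ Gauss--Lobatto in $y$, see \eqref{cell1-2D-2}), groups the Lobatto edge values with the half-edge Gauss flux sums in \eqref{cell1-2D-1}, and reads off $\bar h_{ij}^{n+1,C}$ as a nonnegative linear combination. The only genuine difference is that the paper fixes your splitting parameter at $s=\tfrac12$, which yields the brackets $\frac{\theta}{8}\hat\omega_1\mp\frac{\lambda_x}{2}u^{n,D}$ and $\frac{\theta}{8}\hat\omega_1\mp\frac{\lambda_y}{2}v^{n,D}$; no balancing is then needed, because the sum-form CFL \eqref{gene-CFL-con-2d} trivially implies the two directional bounds $\lambda_x a_x\le\frac14\theta\hat\omega_1$ and $\lambda_y a_y\le\frac14\theta\hat\omega_1$. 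What your free parameter buys is a sharper statement: choosing $s$ to balance the directions, the argument goes through whenever $\lambda_x a_x+\lambda_y a_y\le\frac12\theta\hat\omega_1$, i.e.\ the constant $\frac14$ in \eqref{gene-CFL-con-2d} could be relaxed to $\frac12$, a fact the paper's fixed equal split does not expose. The bookkeeping you flag as the main obstacle is indeed the crux but is unproblematic: the hypothesis of the Proposition covers exactly the node sets $L_{i,j}^{l,m}$ used by both tensor rules, all Gauss and Gauss--Lobatto weights are positive, and the edge values $h^{n,D}(x_{i\pm\frac12},y_j^{m,\alpha})$ and $h^{n,D}(x_i^{l,\alpha},y_{j\pm\frac12})$ are single-valued traces of the dual-mesh polynomial (those edges lie in the interior of dual cells), so they coincide with the values appearing in the flux sums --- precisely the central-DG feature the paper exploits.
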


\begin{proof}
Since the numerical solution $h^{n,D}$ is a piecewise polynomial with degree $k$, the integral of $h^{n,D}$ in cell $C_{ij}$ in \eqref{cell1-2D-1} is exactly evaluated in our numerical implementation.  However, in order to discuss the non-negativity of the cell average $\bar{h}_{ij}^{n+1,C}$. We equivalently evaluate the integral using a combination of the Gauss quadrature rule and the Legendre Gauss-Lobatto quadrature rule as follows:
\begin{eqnarray}\label{cell1-2D-2}
\frac{\theta}{\Delta x\Delta y}\int_{C_{ij}} h^{n,D} dxdy &=& \frac{\theta}{2\Delta x\Delta y}\int_{C_{ij}} h^{n,D} dxdy + \frac{\theta}{2\Delta x\Delta y}\int_{C_{ij}} h^{n,D} dxdy \nonumber\\
&=& \frac{\theta}{8}\sum_{l,m=1}^{2}\sum_{\beta=1}^{\hat N} \sum_{\alpha=1}^{N} \hat {\omega}_{\beta} \omega_{\alpha} h^{n,D}\left(\hat x_i^{l,\beta},y_j^{m,\alpha}\right)  + \frac{\theta}{8}\sum_{l,m=1}^{2}\sum_{\alpha=1}^{N} \sum_{\beta=1}^{\hat N} \omega_{\alpha} \hat{\omega}_{\beta} h^{n,D}\left(x_i^{l,\alpha}, \hat y_j^{m,\beta}\right) \nonumber\\
\end{eqnarray}
Plugging \eqref{cell1-2D-2} into \eqref{cell1-2D-1}, one obtains
\begin{eqnarray}\label{cell1-2D-3}
\bar{h}_{ij}^{n+1,C}
&=& (1-\theta) \bar{h}_{ij}^{n,C} \nonumber\\
&+&\sum_{m=1}^{2}\sum_{\alpha=1}^{N}\omega_{\alpha}\left[ \frac{\theta}{8} \hat {\omega}_{1}-\frac{\Delta t_n}{2\Delta x} u^{n,D}\left(\hat x_i^{2,\hat N},y_j^{m,\alpha}\right) \right]h^{n,D}\left(\hat x_i^{2,\hat N},y_j^{m,\alpha}\right)  \nonumber\\
&+&\sum_{m=1}^{2}\sum_{\alpha=1}^{N}\omega_{\alpha}\left[ \frac{\theta}{8} \hat {\omega}_{1} + \frac{\Delta t_n}{2\Delta x} u^{n,D}\left(\hat x_i^{1,1},y_j^{m,\alpha}\right) \right]h^{n,D}\left(\hat x_i^{1,1},y_j^{m,\alpha}\right)  \nonumber\\
&+&\sum_{l=1}^{2}\sum_{\alpha=1}^{N}\omega_{\alpha}\left[ \frac{\theta}{8} \hat {\omega}_{1}-\frac{\Delta t_n}{2\Delta y} v^{n,D}\left(x_i^{l,\alpha}, \hat y_j^{2,\hat N}\right) \right] h^{n,D}\left(x_i^{l,\alpha}, \hat y_j^{2,\hat N}\right)  \nonumber\\
&+&\sum_{l=1}^{2}\sum_{\alpha=1}^{N}\omega_{\alpha}\left[ \frac{\theta}{8} \hat {\omega}_{1} + \frac{\Delta t_n}{2\Delta y} v^{n,D}\left(x_i^{l,\alpha},\hat y_j^{1,1}\right) \right]h^{n,D}\left(x_i^{l,\alpha}, \hat y_j^{1,1}\right)   \nonumber\\
&+&\frac{\theta}{8}\sum_{l,m=1}^{2}\sum_{\beta=2}^{\hat N - 1} \sum_{\alpha=1}^{N} \hat {\omega}_{\beta} \omega_{\alpha} h^{n,D}\left(\hat x_i^{l,\beta},y_j^{m,\alpha}\right) \nonumber\\
&+&\frac{\theta}{8}\sum_{m=1}^{2} \sum_{\alpha=1}^{N} \omega_{\alpha} \hat {\omega}_{1} \left( h^{n,D}\left(\hat x_i^{2,1},y_j^{m,\alpha}\right) +  h^{n,D}\left(\hat x_i^{1,\hat N},y_j^{m,\alpha}\right) \right) \nonumber\\
&+&\frac{\theta}{8}\sum_{l,m=1}^{2}\sum_{\alpha=1}^{N} \sum_{\beta=2}^{\hat N - 1} \omega_{\alpha} \hat{\omega}_{\beta} h^{n,D}\left(x_i^{l,\alpha}, \hat y_j^{m,\beta}\right) \nonumber\\
&+&\frac{\theta}{8}\sum_{l=1}^{2}\sum_{\alpha=1}^{N}  \omega_{\alpha} \hat{\omega}_{1} \left( h^{n,D}\left(x_i^{l,\alpha}, \hat y_j^{2,1}\right) +  h^{n,D}\left(x_i^{l,\alpha}, \hat y_j^{1,\hat N}\right) \right).
\end{eqnarray}
Here, we used $\hat{\omega}_{\hat N}=\hat{\omega}_{1}$.
A few observations can be made. Firstly, $\bar{h}_{ij}^{n+1,C}$ is a linear combination of $\bar{h}_{ij}^{n,C}$, $h^{n,D}\left(x_i^{l,\alpha}, \hat y_j^{m,\beta}\right)$ and $h^{n,D}\left(\hat x_i^{l,\beta},y_j^{m,\alpha}\right)$, $l,m=1,2,\alpha=1,2,...,N,\beta==1,2,...,\hat N$, which are all non-negative according to the conditions in this Proposition. Secondly, the CFL condition \eqref{gene-CFL-con-2d} and $\theta \in [0,1]$ imply that all coefficients in the linear combination are non-negative. Therefore  $\bar{h}_{ij}^{n+1,C} \geq 0 $, $\forall i,j$. Similarly, one can show $\bar{h}_{ij}^{n+1,D} \geq 0 $, $\forall i,j$.
\end{proof}

\begin{remark}
Although the line to prove Proposition \ref{Th3.2} is similar to the one in \cite{LiM2017}, there is an explicit difference in the proof. In \cite{LiM2017}, the water depth $h$ and the momentum $hu$ may be not consistent due to the numerical error, namely, $hu\neq0$ if $h \equiv 0$. This issue hampers the proof of the non-negativity of the water depth. To overcome the issue, a numerical technique has been employed to modify the momentum to be consistent with the water depth. While according to the numerical scheme in the present paper, the velocity is always equal to zero as the water depth is equal to zero. Therefore, the proof for Proposition \ref{Th3.2} is more natural and simpler than the one in \cite{LiM2017}.
\end{remark}

Next, we give the positivity-preserving limiters which modify the CDG solution polynomials $h^{n,C}$ and $h^{n,D}$ into $\tilde h^{n,C}$ and $\tilde h^{n,D}$ which satisfy the sufficient condition given in Proposition \ref{Th3.2}. In fact, the limiters are the same as in \cite{Zhang2010,Zhang2010-1,Xing2010}, as long as the notation $K$ and $\hat L_K$ are re-defined as follows: On the primal mesh, $K$ denotes a mesh element $C_{ij}$ and $\hat L_K$ represents the set of relevant quadrature points in $K$, namely $\hat L_K=\cup_{l,m=1}^2 L_{i,j}^{l,m}$. On the dual mesh, $K$ denotes a mesh element $D_{ij}$ and $\hat L_K$ represents the set of relevant quadrature points in $K$, namely $\hat L_K=L_{i,j}^{1,1} \cup L_{i,j-1}^{1,2}\cup L_{i-1,j}^{2,1}\cup L_{i-1,j-1}^{2,2}$. Following \cite{Xing2010}, the positivity-preserving limiter is given as follows:
On each mesh element $K$, we modify the water depth $h^{n,\star} $ into $\tilde h^{n,\star}=\alpha_K(h^{n,\star}-\bar h^{n,\star})+ \bar h^{n,\star}$, with $\alpha_K=\min_{x \in \hat L_K} \left\{1, |\bar h^{n,\star} /(\bar h^{n,\star}- h^{n,\star}(x)) | \right\}$ and $\star=C,D$.

\subsubsection{Positivity-preserving well-balanced CDG-FE method}
\label{sec:3.2.4}

Finally, we study the positivity-preserving and well-balanced CDG-FE method for the 2D Green-Naghdi model \eqref{Eq:hP}-\eqref{GN-NFB-2D-law}. We start with the well-balanced CDG method satisfied by the cell average of the numerical solution $h^C$, which is obtained by taking the test function $\bV=(\frac{1}{\Delta x \Delta y}, 0, 0)^\top$ in the scheme (\ref{Comp-WB-SW-2D-1}),
\begin{eqnarray}\label{WB-gene-cell-2D}
\bar h_{ij}^{n+1,C}&=& (1-\theta) \bar h_{ij}^{n,C} + \frac{\theta}{\Delta x \Delta y}  \int_{C_{ij}}h^{n,D}dxdy   \nonumber\\
&-&\frac{\Delta t_n}{\Delta x \Delta y} \int_{y_{j-\frac{1}{2}}}^{y_{j+\frac{1}{2}}}\left[h^{n,D}(x_{i+\frac{1}{2}},y)u^{n,D}(x_{i+\frac{1}{2}},y)-
h^{n,D}(x_{i-\frac{1}{2}},y)u^{n,D}(x_{i-\frac{1}{2}},y)\right] dy \nonumber\\
&-&\frac{\Delta t_n}{\Delta x \Delta y} \int_{x_{i-\frac{1}{2}}}^{x_{i+\frac{1}{2}}}\left[h^{n,D}(x,y_{j+\frac{1}{2}})v^{n,D}(x,y_{j+\frac{1}{2}})-
h^{n,D}(x,y_{j-\frac{1}{2}})v^{n,D}(x,y_{j-\frac{1}{2}})\right] dx \nonumber\\
&+& \theta\left( \frac{1}{\Delta x \Delta y}\int_{C_{ij}}b^{D}\cdot dxdy - \bar b_{ij}^{C} \right)~,
\end{eqnarray}
where $\bar b_{ij}^C$ (resp. $\bar b_{ij}^D$)  is the cell average of the bottom topography $b^C$ (resp. $b^D$) on the element $C_{ij}$ (resp. $D_{ij}$).

\begin{prop}
\label{Th3.3}
For any given $n\ge 0$, we assume $\bar h_{ij}^{n,C}\ge 0$ and $\bar h_{ij}^{n,D}\ge 0$, $\forall i, j$. Consider the scheme in \eqref{WB-gene-cell-2D}  and its counterpart for $\bar h_{ij}^{n+1,D}$,   if
\begin{equation}\label{bot-con-2d}
\bar b_{ij}^C=\frac{1}{\Delta x \Delta y} \int_{C_{ij}} b^{D} dxdy~, \qquad \bar b_{ij}^D= \frac{1}{\Delta x \Delta y} \int_{D_{ij}} b^{C} dxdy~, \quad \forall i, j~,
\end{equation}
and $h^C(x,y,t_n)\ge 0$, $h^D(x,y,t_n)\ge 0$, $\forall (x,y) \in L_{i,j}^{l,m}$, $\forall i, j$ with $l,m=1,2$, then $\bar h_{ij}^{n+1, C}\ge 0$ and $\bar h_{ij}^{n+1, D} \ge 0$, $\forall i, j$, under the CFL condition
\begin{equation}\label{CFL_con_2D}
\lambda_x a_x + \lambda_y a_y \leq  \frac{1}{4}\theta \hat \omega_1~.
\end{equation}
\end{prop}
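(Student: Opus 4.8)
The plan is to reduce Proposition \ref{Th3.3} directly to the positivity result already established in Proposition \ref{Th3.2}. The crucial observation is that the well-balanced cell-average evolution equation \eqref{WB-gene-cell-2D} is identical to the standard cell-average equation \eqref{gene-cell-2D}, except for the single extra correction term $\theta\left( \frac{1}{\Delta x \Delta y}\int_{C_{ij}}b^{D}\,dxdy - \bar b_{ij}^{C} \right)$ appearing on its last line, which is the well-balancing contribution descending from the solution correction $\btU$ defined in \eqref{Solution-Correction-1}.

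First I would invoke the first compatibility relation in \eqref{bot-con-2d}, namely $\bar b_{ij}^C = \frac{1}{\Delta x \Delta y} \int_{C_{ij}} b^{D}\,dxdy$, which makes this extra correction term vanish identically. Consequently, under the hypothesis \eqref{bot-con-2d} the well-balanced cell-average scheme \eqref{WB-gene-cell-2D} for $\bar h_{ij}^{n+1,C}$ coincides exactly with the standard scheme \eqref{gene-cell-2D}. The same reasoning applied on the dual mesh, using the second relation $\bar b_{ij}^D = \frac{1}{\Delta x \Delta y} \int_{D_{ij}} b^{C}\,dxdy$, reduces the counterpart equation for $\bar h_{ij}^{n+1,D}$ to its standard form as well.

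Having effected this reduction, I would simply apply Proposition \ref{Th3.2} verbatim. Since the CFL condition \eqref{CFL_con_2D} is exactly \eqref{gene-CFL-con-2d}, and the nonnegativity hypotheses on $\bar h_{ij}^{n,C}$, $\bar h_{ij}^{n,D}$ and on the pointwise values of $h^C, h^D$ at the quadrature points in $L_{i,j}^{l,m}$ are identical, the convex-combination argument underlying \eqref{cell1-2D-3} carries over without any change, yielding $\bar h_{ij}^{n+1,C}\ge 0$ and $\bar h_{ij}^{n+1,D}\ge 0$ for all $i,j$.

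The only genuine content, and hence the main (though quite mild) obstacle, is verifying that the two averaging relations in \eqref{bot-con-2d} are exactly what is needed to annihilate the well-balancing correction on both the primal and the dual meshes simultaneously. Once this cancellation is confirmed, no new positivity analysis is required: the argument is essentially a transfer of Proposition \ref{Th3.2} through the identity \eqref{bot-con-2d}, so I would not reproduce the decomposition \eqref{cell1-2D-3} in detail but merely point to it.
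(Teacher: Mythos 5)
Your proposal is correct and follows exactly the route the paper takes: its proof simply states that the result is ``a direct result of Proposition \ref{Th3.2} and condition \eqref{bot-con-2d}'', which is precisely your reduction. Your write-up merely makes explicit the key cancellation---that \eqref{bot-con-2d} annihilates the well-balancing correction term $\theta\bigl( \frac{1}{\Delta x \Delta y}\int_{C_{ij}}b^{D}\,dxdy - \bar b_{ij}^{C} \bigr)$ so that \eqref{WB-gene-cell-2D} collapses to \eqref{gene-cell-2D}---which the paper leaves implicit.
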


\begin{proof}
The proof is a direct result of Proposition \ref{Th3.2} and condition \eqref{bot-con-2d}.
\end{proof}

\begin{remark}
To enforce the sufficient conditions in Proposition \ref{Th3.3}, we also need to modify the approximations to the bottom and use the positivity-preserving limiter in Section \ref{sec:3.2.3}.
The new approximations to the bottom topography $b(x,y)$, still denoted by $b^C$ and $b^D$, are obtained by solving a constrained minimization problem similar to the one in \cite{LiM2017} with the Lagrange multiplier method.
\end{remark}

\subsection{High-order time discretizations and nonlinear limiters}
\label{sec:3.3}
To achieve better accuracy in time, the strong stability preserving (SSP) high-order time discretizations (\cite{Gottlieb2001}) will be used in the numerical simulations . Such discretizations can be written as a convex combination of the forward Euler method, and therefore the resulting SSP schemes are also well-balanced and positivity-preserving. In this paper, we use the third order TVD Runge-Kutta method for the time discretization.

When the CDG method is applied to nonlinear problems, nonlinear limiters are often needed to prevent numerical instabilities. In this work,  we use the total variation bounded (TVB) minmod slope limiter with parameter $M=10$ (\cite{Cock1998}) in a componentwise way as it is needed. This limiter is applied to $(h+b, hP, hQ)^\top$ and it is used prior to application of the positivity-preserving limiter.

\section{Numerical examples}

\subsection{Accuracy test}
\label{sec:4.1}
In this example, we test the convergence rate of the proposed CDG-FE method by varying the mesh size. The Green-Naghdi model \eqref{GN-NFB-2D} with $b=0$ has an exact solution given by (\cite{Su1969})
\begin{equation}\label{Eq:so}
\left\{\begin{array}{lcl}
  h(x,y,t)=h_1+(h_2-h_1)\mbox{sech}^2\left(\frac{x-Dt}{2}\sqrt{\frac{3(h_2-h_1)}{h_2h_1^2}} \right)\\
  u(x,y,t)=D\left(1-\frac{h_1}{h(x,t)} \right)~,
  v(x,y,t)=0,
\end{array}\right .
\end{equation}
where $h_1$ is the typical water depth, $h_2$ corresponds to the solitary wave crest and $D=\sqrt{gh_2}$ is the wave speed.
In this test, we employ a solitary wave with $h_1=1$ and $h_2=2.25$ in \eqref{Eq:so} which is initially located at $x=0$ and propagating in the positive $x$-direction.
The computational domain is $[-30, 50]\times[-1,1]$ and the final time is $1$. An outgoing boundary condition is used in the $x$-direction and a periodic boundary condition is used in the $y$-direction. We use regular meshes with $\Delta x=\Delta y=1,0.5,0.25,0.125$. The time step is $\Delta t =0.1 \Delta x$. We present $L^2 $ errors and orders of accuracy for $h$ and $u$ in Table \ref{table:smooth1:1}. The results show that the CDG-FE method is $(k+1)$st order accurate for $P^k$ with $k=1,2$ and therefore it is optimal with respect to the approximation properties of the discrete spaces.

\begin{table}[!ht]
\caption{$L^2$ errors and orders of accuracy of $(h,u)$.}
\centering
\begin{tabular}{lllllllllllll}
  \hline\label{accuracytest}
   &&$h$&&& &&&$u$&&& &\\
  \cline{3-7}\cline{9-13}
   $\Delta x$ &&$P^1$    &   && $P^2$ &   && $P^1$ &  && $P^2$ & \\
   \cline{3-4}\cline{6-7}\cline{9-10}\cline{12-13}
   &&$L^2$ error & Order && $L^2$ error & Order && $L^2$ error & Order&& $L^2$ error & Order\\
   \cline{1-13}
     1     && 2.28E-01   & ---   && 7.80E-02     & ---   &&  5.16E-01    &  ---   &&  1.05E-01    &  --- \\
     0.5   && 6.00E-02   & 1.93  && 9.94E-03     & 2.97  &&  1.27E-02    &  2.03  &&  1.47E-02    &  2.84 \\
     0.25  && 1.53E-02   & 1.97  && 1.27E-03     & 2.97  &&  2.94E-02    &  2.11  &&  1.84E-03    &  2.99 \\
     0.125 && 3.53E-03   & 2.12  && 1.64E-04     & 2.96  &&  7.03E-03    &  2.07  &&  2.29E-04    &  3.01\\
  \hline
\end{tabular}
\label{table:smooth1:1}
\end{table}

\subsection{Stationary solution}
\label{sec:4.2}
In this test, we validate the well-balanced feature and the positivity-preserving property of the proposed method as applied to continuous and discontinuous variable bottoms. The initial conditions are
\begin{equation}\label{SS-Initial2D}
u(x,y,0)=0~, \qquad  v(x,y,0)=0~, \qquad h(x,y,0)+b(x,y)=0.50001~,
\end{equation}
and the continuous bottom profile (Case A) is defined by
\begin{equation}
b(x,y)=\left\{\begin{array}{lclclcl}
0.2~,  & r\leq 0.3~,\\
0.5-r~,       & 0.3\leq r\leq 0.5~,\\
0~, & \mbox{otherwise}~,
\end{array}\right.
\end{equation}
with $r=\sqrt{x^2+y^2}$, while the discontinuous bottom profile (Case B) is given by
\begin{equation}
b(x,y)=\left\{\begin{array}{lcl}
0.5~, & -0.5\leq x,y\leq 0.5~,\\
0~, & \mbox{otherwise}~.
\end{array}\right .
\end{equation}
We choose $[-1,1]\times[-1,1]$ as the computational domain, divided into $20\times20$ elements, and use outgoing boundary conditions. We compute the solution up to $t=10$ by the well-balanced CDG methods. Notice that there exists a near dry area for the second case.

For these cases, the standard CDG-FE method usually fails to preserve the still-water stationary solution exactly. Especially for the second case, the standard CDG-FE method will produce negative water depth due to the numerical oscillation, and thus the computation will blow down. To demonstrate that the positivity-preserving well-balanced CDG-FE scheme indeed preserves the still-water stationary solution exactly (i.e., up to machine precision), we perform the computation in both single and double precision. The corresponding $L^2$ errors on the water surface $h+b$ and velocity $(u,v)$ are listed in Table \ref{table:stationary_solution2D} for both topographies. We see that the errors have orders of a magnitude consistent with the machine single and double precision, and thus the numerical results verify the well-balanced property and the positivity-preserving property.

\begin{table}[!ht]
\caption{ $L^{\infty}$ errors on $(h+b,u,v)$ for the stationary solution at $t=10$.}
\centering
\begin{tabular}{llllllllll}
&&&&&&\\
\hline
  Case       &&  precision   && $h+b$             && $u$            && $v$   \\
 \cline{1-1}     \cline{3-3}    \cline{5-5}          \cline{7-7}       \cline{9-9}
  A          &&  single      && 5.96E-08	      && 8.86E-10	 	&& 9.73E-10\\
  A          &&  double      && 2.56E-16	  	  && 2.81E-16	    && 2.23E-16\\
  B          &&  single      && 5.96E-08	  	  && 4.21E-09	    && 2.45E-09\\
  B          &&  double      && 4.93E-16	 	  && 5.32E-16	    && 2.33E-16\\
\hline
\end{tabular}
\label{table:stationary_solution2D}
\end{table}

\subsection{Solitary wave overtopping a seawall}
\label{sec:4.3}
In this test, we consider the simulation of a solitary wave overtopping a seawall, which has been studied experimentally in \cite{Hsiao2010} and numerically in \cite{Lannes2015}. This example is also used to investigate the validity of our positivity-preserving well-balanced scheme. The initial solitary wave ($h_1=0.2,h_2=0.27$ in \eqref{Eq:so}) and the bottom topography including the seawall are shown in Figure \ref{Fig:seawall} (along $y=0$). The computational domain is $[-5,20]\times[-0.2,0.2]$ discretized into $500 \times 8$ uniform elements.
The outgoing boundary condition is used in the $x$-direction and the periodic boundary condition is used in the $y$-direction.  For the standard CDG-FE method, negative water
depth was generated during the simulation at the dry or near dry areas, this causes inaccurate velocity which is used as the boundary condition when solving the elliptic equations, and then the numerical solution blows up within a few dozen time steps. For the positivity-preserving well-balanced CDG-FE method, however, the water depth remains non-negative during the entire simulation. The numerical surface profiles (along $y=0$) at time $t=5, 7.5, 12.5, 20$ are plotted in Figure \ref{Fig:seawall-profiles}. The time series of the wave elevation at several positions of gauges ($x=5.9, 7.6, 9.644, 10.462, 10.732$ and $11.12$, $y=0$) are illustrated in Figure \ref{Fig:seawall-gages}. The time origin has been shifted in order to compare with results reported in \cite{Hsiao2010}. Both models with $\alpha=1$ and $\alpha=1.159$ give almost the same results. Our numerical results are similar to experimental data and the numerical results in \cite{Hsiao2010}.

\begin{figure}
\begin{center}
\includegraphics[height=6.0cm,width=12.0cm,angle=0]{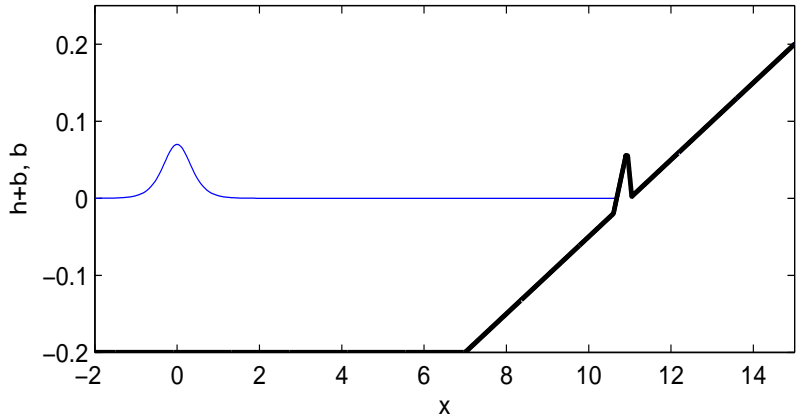}
\caption{The sketch of the topography and the initial wave for solitary wave overtopping a seawall.}
\label{Fig:seawall}
\end{center}
\end{figure}

\begin{figure}
\begin{center}
\includegraphics[height=4.5cm,width=8cm,angle=0]{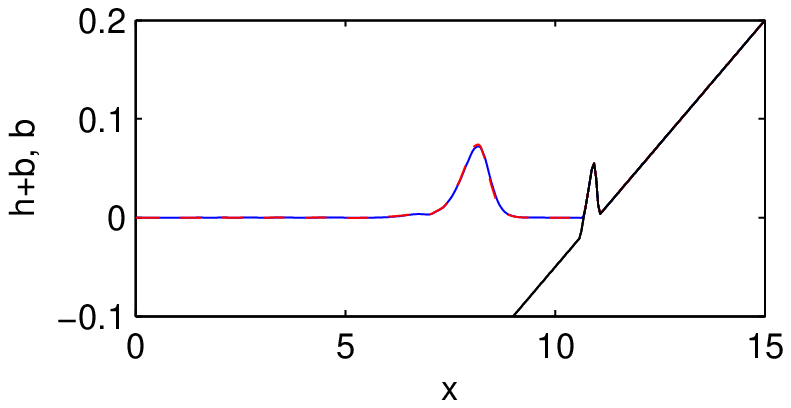}
\includegraphics[height=4.5cm,width=8cm,angle=0]{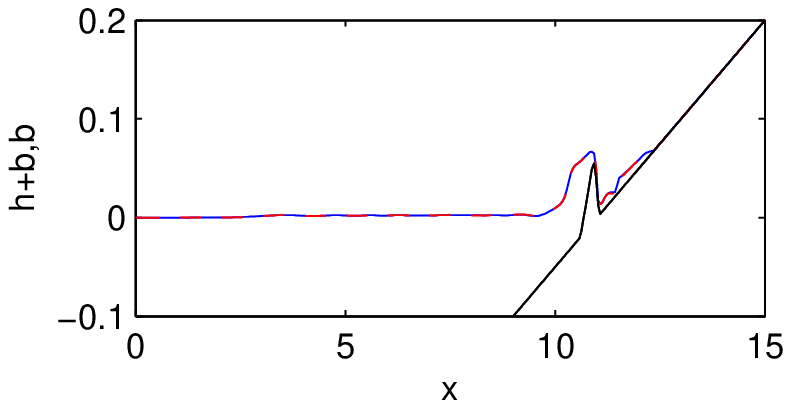}\\
\includegraphics[height=4.5cm,width=8cm,angle=0]{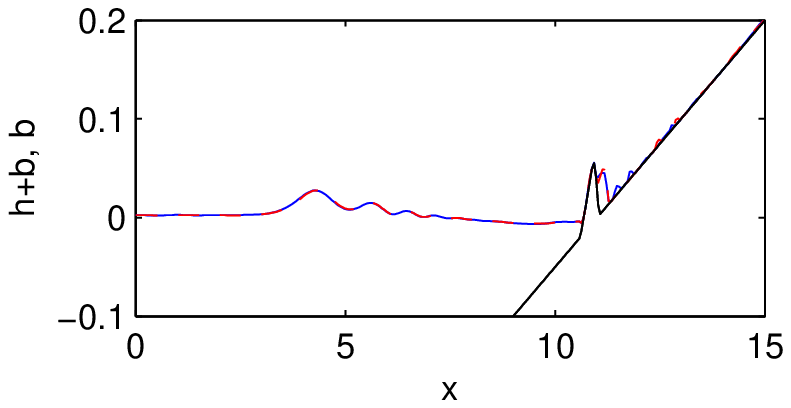}
\includegraphics[height=4.5cm,width=8cm,angle=0]{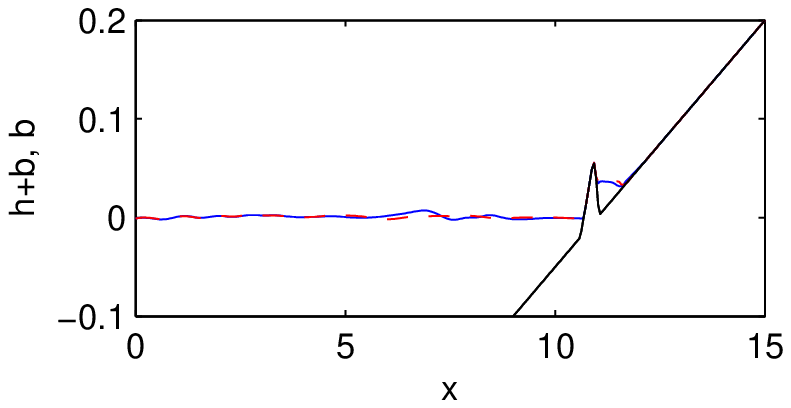}
\caption{Water surface at several times ($t=5, 7.5, 12.5, 20$, from left to right, from top to bottom). Blue line: numerical results with $\alpha=1$; Red dashed line: numerical results with $\alpha=1.159$.}
\label{Fig:seawall-profiles}
\end{center}
\end{figure}

\begin{figure}
\begin{center}
\includegraphics[height=2.6cm,width=8.0cm,angle=0]{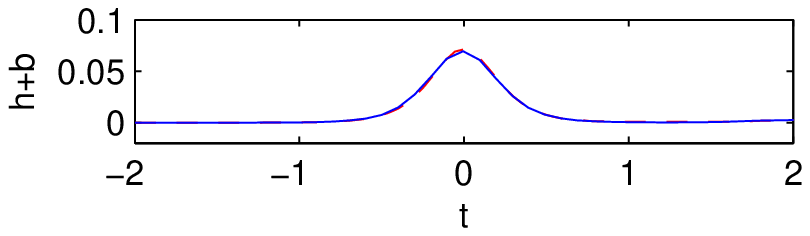}
\includegraphics[height=2.6cm,width=8.0cm,angle=0]{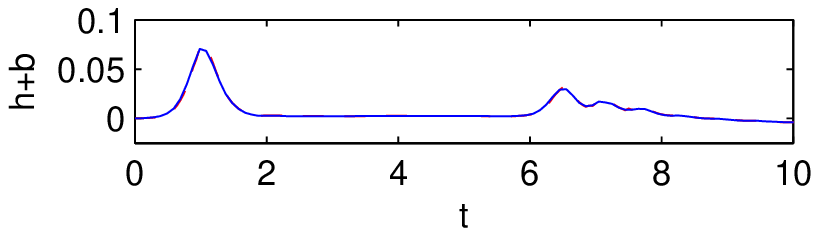}\\
\includegraphics[height=2.6cm,width=8.0cm,angle=0]{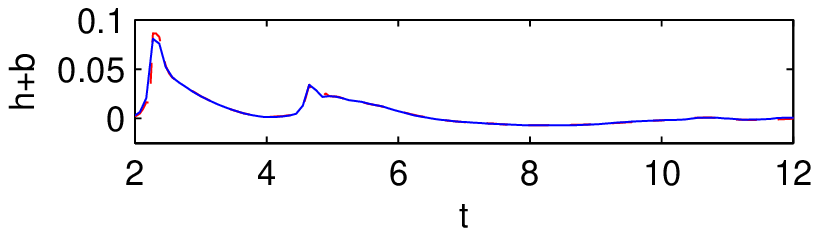}
\includegraphics[height=2.6cm,width=8.0cm,angle=0]{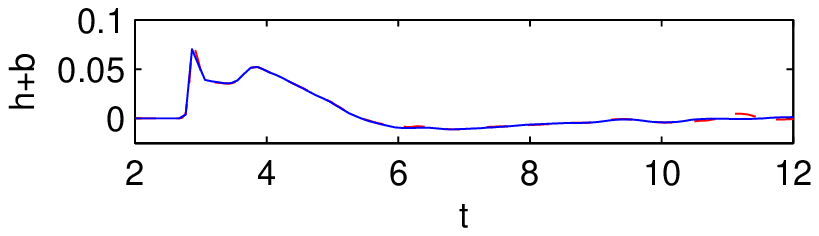}\\
\includegraphics[height=2.6cm,width=8.0cm,angle=0]{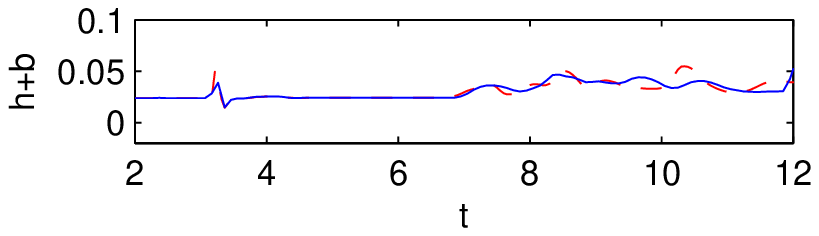}
\includegraphics[height=2.6cm,width=8.0cm,angle=0]{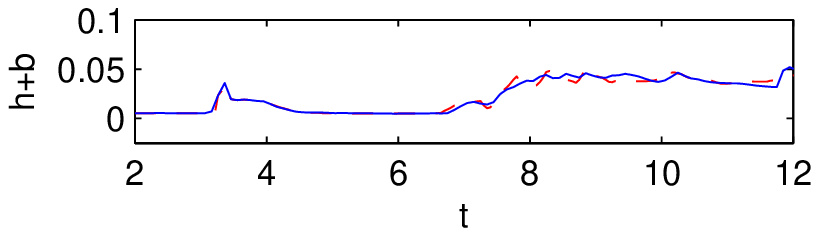}
\caption{The time series of the free surface elevation due to waves interacting against seawall at several gages ($x=5.9, 7.6, 9.644, 10.462, 10.732, 11.12$, from left to right, from top to bottom). Blue line: numerical results with $\alpha=1$; Red dashed line: numerical results with $\alpha=1.159$.}
\label{Fig:seawall-gages}
\end{center}
\end{figure}

\subsection{Periodic waves propagation over a submerged bar}
\label{sec:4.4}
In this example, we investigate the robustness of the modified Green-Naghdi model. We first consider the propagation of periodic Stokes waves over a submerged bar with plane slopes. The bottom variation is specified by
\begin{equation}\label{Eq:HarmonicB}
b(x,y)=\left\{\begin{array}{lcl}
-0.4+0.05(x-6), & 6\le x\le 12 \\
-0.1,        &  12 \le x \le 14\\
-0.1 - 0.1(x-14), & 14 \le x \le 17\\
-0.4,  & \mbox{elsewhere}~,
\end{array}\right.
\end{equation}
and is also exhibited in Figure \ref{Fig:harmonics_bottom} (along $y=0$) in which we also label the positions of $10$ gauges used in \cite{Ding1994}.

As shown in experimental work (\cite{Ding1994}), regular waves break up into higher-frequency free waves as they propagate past a submerged bar. As the waves travel up the front slope of the bar, higher harmonics are generated due to nonlinear interactions, causing the waves to steepen. These harmonics are then released as free waves on the downslope, producing an irregular pattern behind the bar. This experiment is particularly difficult to simulate because it includes nonlinear interactions and requires accurate propagation of waves in both deep and shallow water over a wide range of depths. Therefore it has often been used as a discriminating test case for nonlinear models of surface wave propagation over variable bottom (\cite{Chazel2011,Ding1994,Guyenne2007}).

In the simulation, the computational domain is $[0, 25]\times[-0.2, 0.2]$, divided into $500 \times 8$ uniform cells. At initial time, $h+b=0$ and $u=v=0$ in the computational domain.
The incident wave (entering from the left) is a third-order Stokes wave (\cite{Fenton1985}) given by
\begin{eqnarray}
\label{harmonics-incident}
\eta(x,t)&=&a_0 \cos\left( 2 \pi \left( \frac{x}{\lambda} - \frac{t}{T_0} \right)  \right)
+ \frac{\pi a_0^2}{\lambda} \cos\left( 4 \pi \left( \frac{x}{\lambda} - \frac{t}{T_0} \right)  \right) \nonumber \\
&-& \frac{\pi^2 a_0^3}{2 \lambda^2} \left[ \cos\left( 2 \pi \left( \frac{x}{\lambda} - \frac{t}{T_0} \right)  \right) - \cos\left( 6 \pi \left( \frac{x}{\lambda} - \frac{t}{T_0} \right)  \right) \right]~,
\end{eqnarray}
where $T_0$, $a_0$ and $\lambda$ denote the wave period, amplitude and wavelength, respectively. We choose $(T_0, a_0, \lambda) = (2.02, 0.01, 3.73)$ corresponding to one of the experiments in \cite{Ding1994}. An absorbing boundary condition is applied at the right boundary and a periodic boundary condition is used at the upper and bottom boundaries.

Figure \ref{Fig:harmonics6} depicts the time histories of the water surface at the first $6$ gauges ($x=2$, $4$, $10.5$, $12.5$, $13.5$ and $14.5$, $y=0$) and Figure \ref{Fig:harmonics4} depicts the time histories of the water surface at the last $4$ gauges ($x=15.7$, $17.3$, $19$ and $21$, $y=0$). The time origin has been shifted in order that the numerical results match the measurements for the first gauge at $x = 2, y=0$. We compare three sets of data:  experimental data (\cite{Ding1994}), numerical solutions from the original Green-Naghdi model ($\alpha=1$),  numerical solutions from the improved Green-Naghdi model ($\alpha=1.159$). It can be seen from  Figure \ref{Fig:harmonics6} that before the crest of the bar ($x \le 14.5$), the numerical results from both models ($\alpha=1$ and $\alpha=1.159$) match well with each other and compare well with experimental data.  However, it can be seen from Figure \ref{Fig:harmonics4} that, compared with the experimental data, discrepancies in amplitude and phase can be observed for gauges beyond the crest of the bar ($x \ge 15.7$) for the numerical solutions with $\alpha=1$, these discrepancies should be attributed to the weakly dispersive character of the original Green-Naghdi model, while for the modified Green-Naghdi model, the numerical solutions match well with the experimental data.

\begin{figure}
\begin{center}
\includegraphics[height=6.0cm,width=12.0cm,angle=0]{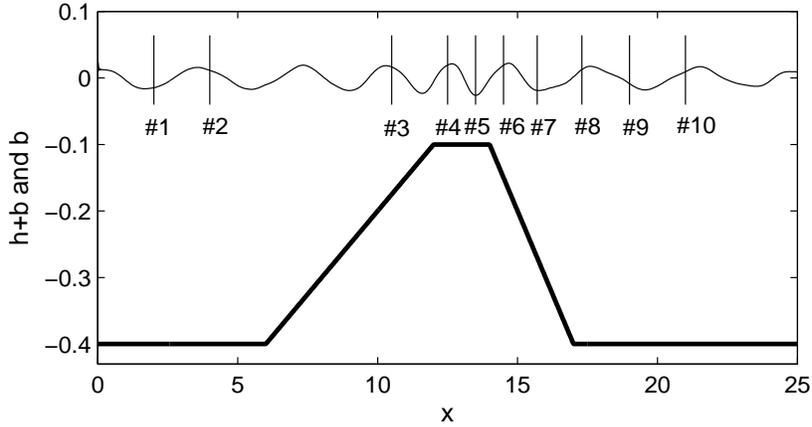}
\caption{Experimental set-up and locations of the wave gauges as used in \cite{Ding1994}.}
\label{Fig:harmonics_bottom}
\end{center}
\end{figure}

\begin{figure}
\begin{center}
\includegraphics[height=3.5cm,width=8.0cm,angle=0]{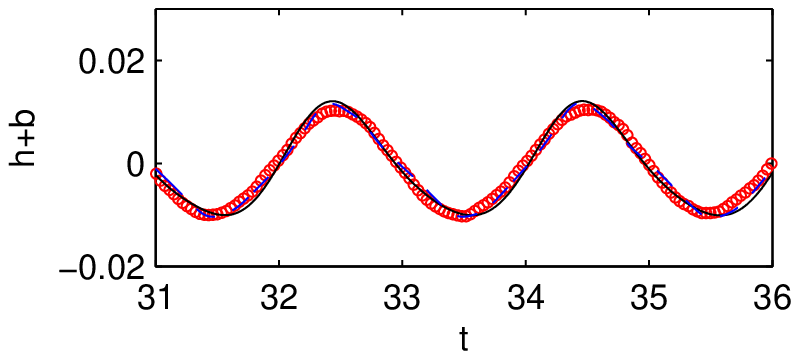}
\includegraphics[height=3.5cm,width=8.0cm,angle=0]{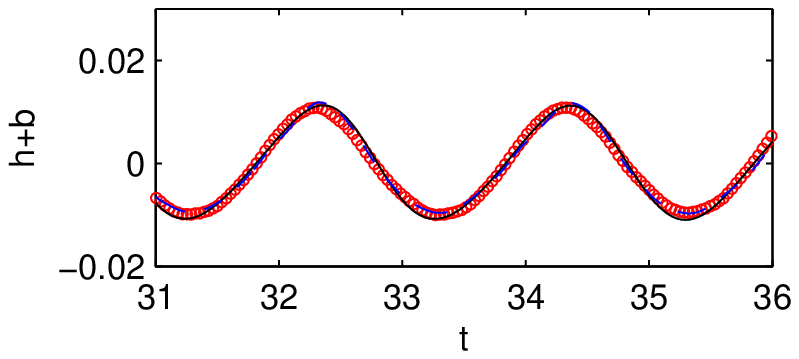}\\
\includegraphics[height=3.5cm,width=8.0cm,angle=0]{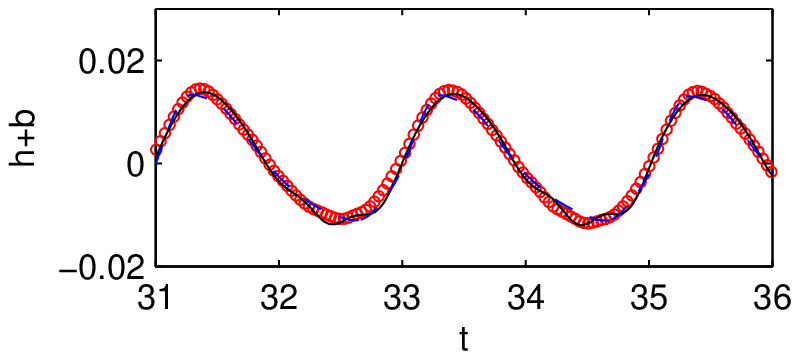}
\includegraphics[height=3.5cm,width=8.0cm,angle=0]{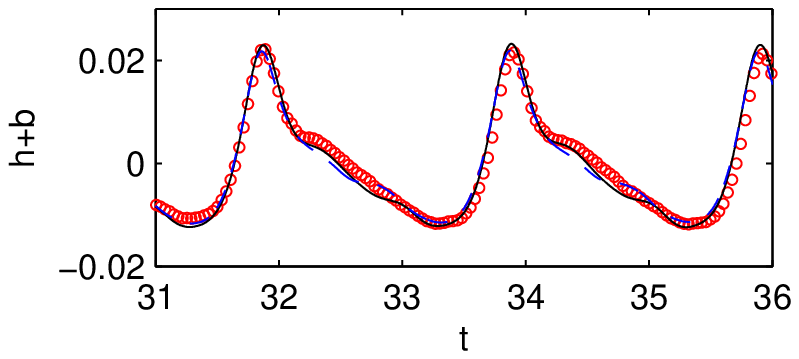}\\
\includegraphics[height=3.5cm,width=8.0cm,angle=0]{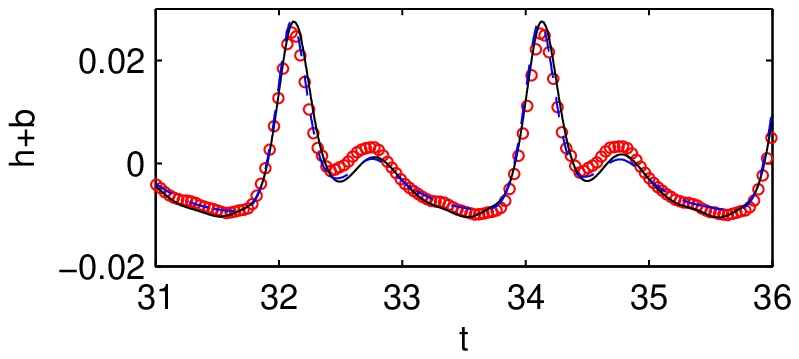}
\includegraphics[height=3.5cm,width=8.0cm,angle=0]{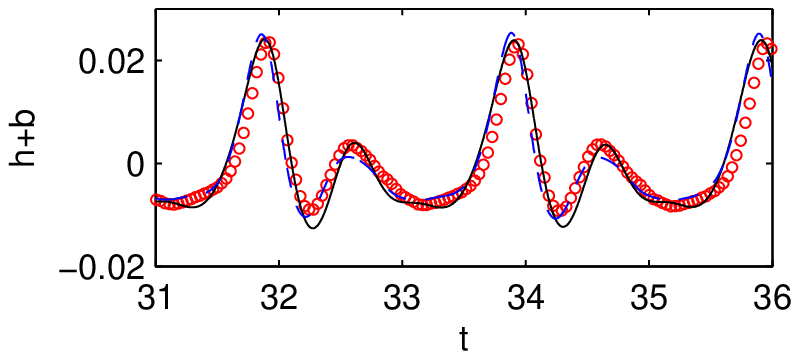}
\caption{Time series of surface elevations for waves passing over a submerged bar at $x=2$, $4$, $10.5$, $12.5$, $13.5$ and $14.5$ (from left to right, from top to bottom).  Circles: experimental data (\cite{Ding1994}), green solid line: numerical results with $\alpha=1$, blue solid line: numerical results with $\alpha=1.159$.}
\label{Fig:harmonics6}
\end{center}
\end{figure}

\begin{figure}
\begin{center}
\includegraphics[height=3.5cm,width=8.0cm,angle=0]{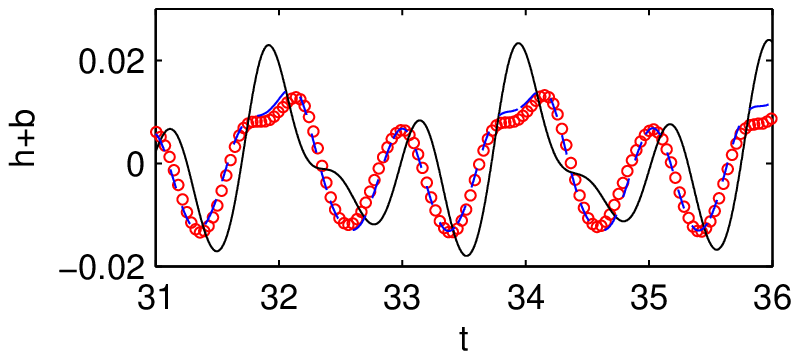}
\includegraphics[height=3.5cm,width=8.0cm,angle=0]{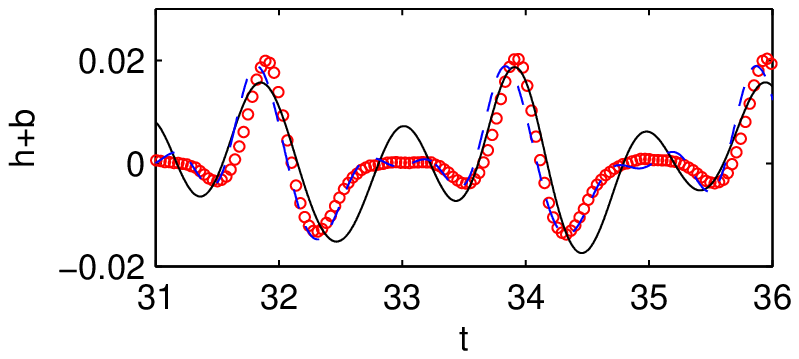}\\
\includegraphics[height=3.5cm,width=8.0cm,angle=0]{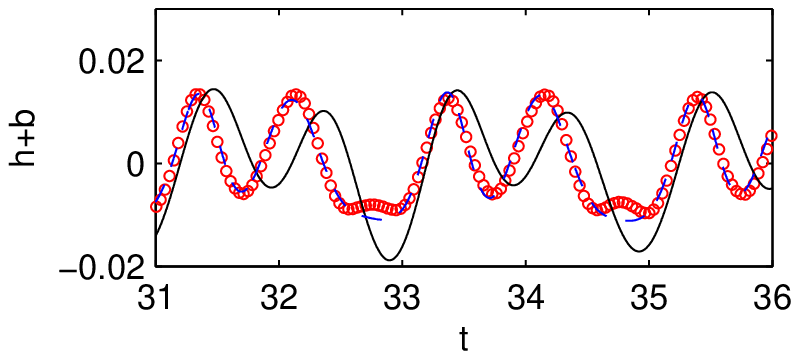}
\includegraphics[height=3.5cm,width=8.0cm,angle=0]{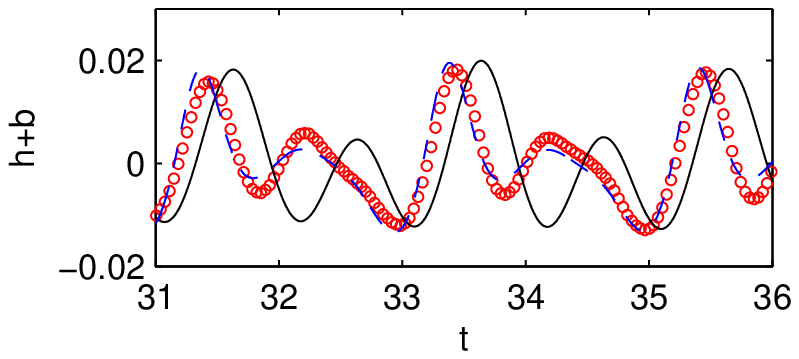}
\caption{Time series of surface elevations for waves passing over a submerged bar at $x=15.7$, $17.3$, $19$ and $21$ (from left to right, from top to bottom).  Circles: experimental data (\cite{Ding1994}), green solid line: numerical results with $\alpha=1$, blue solid line: numerical results with $\alpha=1.159$.}
\label{Fig:harmonics4}
\end{center}
\end{figure}

In this test, we further study the dispersive effect of the modified Green-Naghdi model. We consider a periodic wave propagation over a submerged bar with elliptic slope (see Figure \ref{Fig:elliptic-slope}), which is also given by
\begin{equation}\label{Eq:elliptic-slope}
b(x,y)=\left\{\begin{array}{lcl}
-0.1, &  r < \frac{47}{576} , \\
1.2 \sqrt{1-r}-1.25 ,        &  \frac{47}{576} \le r  \le \frac{287}{576},\\
-0.4,  & \mbox{elsewhere}~,
\end{array}\right.
\end{equation}
with $r=\frac{(x-12.5)^2}{100}+\frac{y^2}{16}$.

\begin{figure}
\begin{center}
\includegraphics[height=6.0cm,width=12.0cm,angle=0]{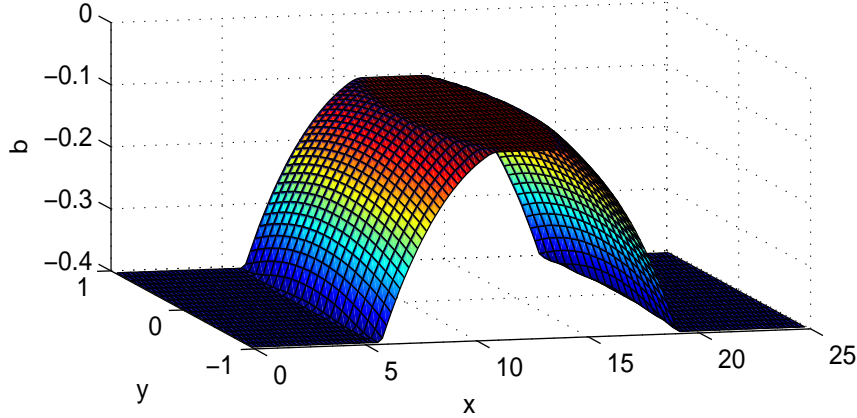}
\caption{The bottom topography for periodic wave propagation over a submerged bar with elliptic slope.}
\label{Fig:elliptic-slope}
\end{center}
\end{figure}

The computational domain is $[0,25]\times[-1,1]$, which is discretized into $125\times20$ uniform elements. At initial time, $h+b=0$ and $u=v=0$ in the computational domain. The incident wave (entering from the left) is a third-order Stokes wave given in \eqref{harmonics-incident} with $(T_0, a_0, \lambda) = (3, 0.01, 3.73)$. Solid wall boundary conditions are used at the top and bottom boundaries, an absorbing boundary condition is used at right boundary. The numerical free surfaces at $t=30$ for both models ($\alpha=1$ and $\alpha=1.159$) are shown in Figure \ref{Fig:Surface-elliptic-slope}. Overall, both numerical surfaces compare well with each other. To observe the discrepancies more clearly, the time series of numerical surfaces at several positions ($A_1(8,0)$, $A_2(9,0.5)$, $A_3(21,0)$, $A_4(18,0)$, $A_5(19,0.5)$ and $A_6(22,0.5)$) are shown in Figure \ref{Fig:Surface-elliptic-slope-time}. It can be seen that both numerical surfaces compare well with each other for gauges before the crest of the bar ($x \le 9$). However, the discrepancies in amplitude and phase can be observed for gauges beyond the crest of the bar ($x \ge 18$). These discrepancies again should be attributed to the dispersive character of the models.

\begin{figure}
\begin{center}
\includegraphics[height=9.5cm,width=12.0cm,angle=0]{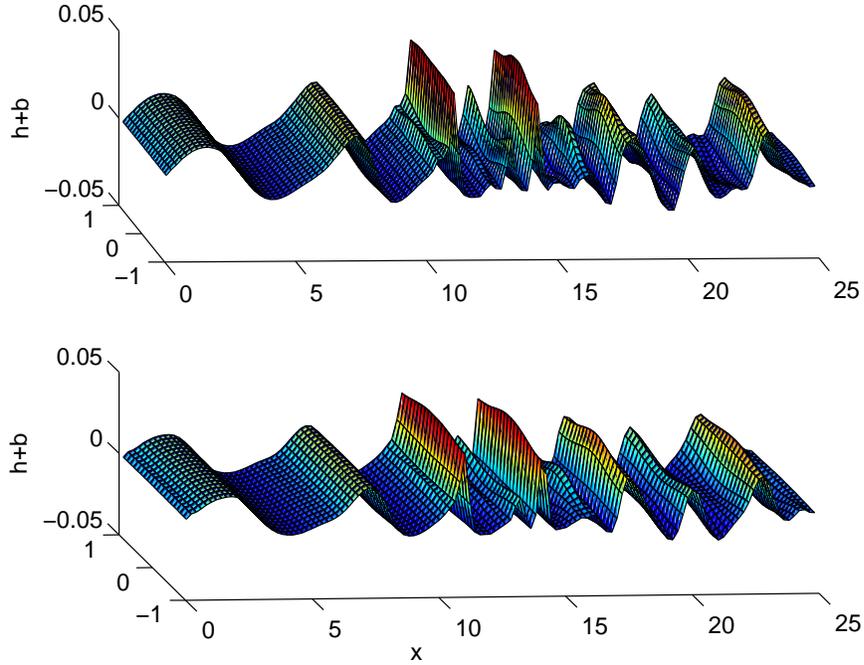}
\caption{Numerical surface at $t=30$ for periodic wave propagation over a submerged bar with elliptic slope. Top: numerical results with $\alpha=1$; bottom: numerical results with $\alpha=1.159$. }
\label{Fig:Surface-elliptic-slope}
\end{center}
\end{figure}

\begin{figure}
\begin{center}
\includegraphics[height=3.1cm,width=8.0cm,angle=0]{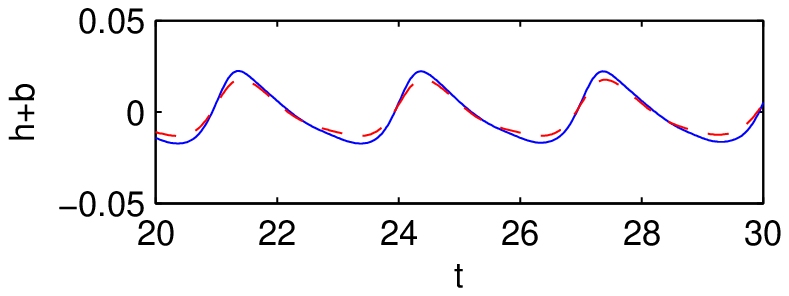}
\includegraphics[height=3.1cm,width=8.0cm,angle=0]{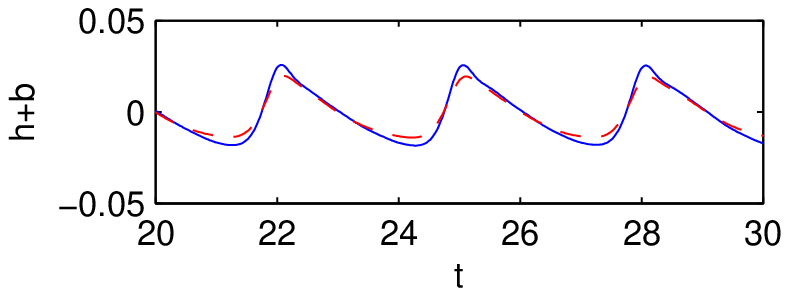}\\
\includegraphics[height=3.1cm,width=8.0cm,angle=0]{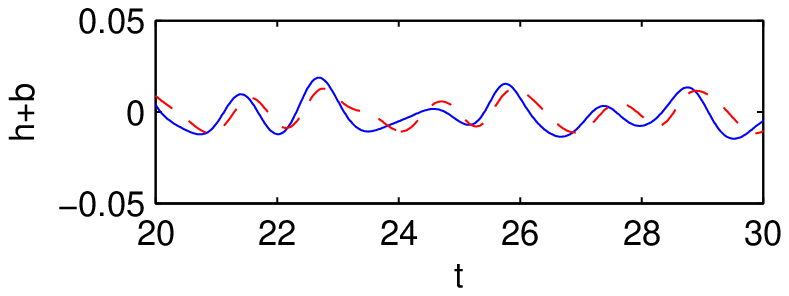}
\includegraphics[height=3.1cm,width=8.0cm,angle=0]{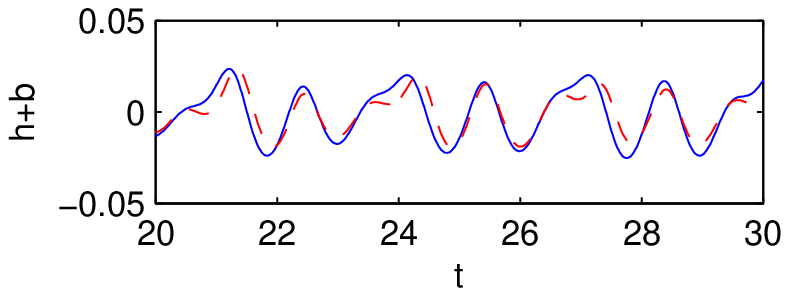}\\
\includegraphics[height=3.1cm,width=8.0cm,angle=0]{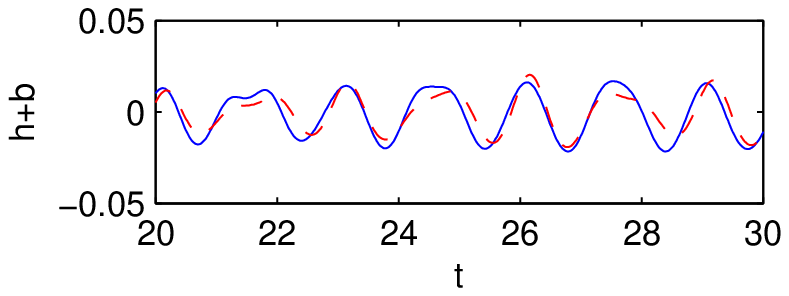}
\includegraphics[height=3.1cm,width=8.0cm,angle=0]{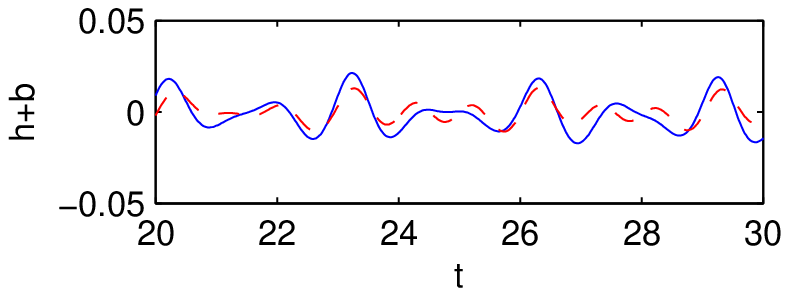}
\caption{Time series of numerical surface at six gages ($A_1(8,0)$, $A_2(9,0.5)$, $A_3(21,0)$, $A_4(18,0)$, $A_5(19,0.5)$ and $A_6(22,0.5)$, from left to right, from top to bottom) for periodic wave propagation over a submerged bar with elliptic slope. Blue lines: numerical results with $\alpha=1$; red dashed lines: numerical results with $\alpha=1.159$. }
\label{Fig:Surface-elliptic-slope-time}
\end{center}
\end{figure}

\subsection{Solitary wave propagation over a composite beach}
\label{sec:4.5}
To further investigate the robustness of the modified Green-Naghdi model, we simulate the propagation of solitary waves over a composite beach, which consists of three piece-wise linear segments, terminated with a vertical wall on the left. The slopes of the topography are defined as follows (\cite{Lannes2015}):
\begin{equation}\label{Eq:compo-beach}
s(x,y)=\left\{\begin{array}{lcl}
0,    & x\leq 15.04, \\
1/53, &  15.04 \le x \leq 19.4,\\
1/150, & 19.4 \le x \leq 22.33,\\
1/13,  & 22.33 \le x \leq 23.23.
\end{array}\right.
\end{equation}

We consider the propagation of a solitary wave: $h_1=0.22,h_2=1.73h_1$. The initial solitary waves, produced by using \eqref{Eq:so}, are located at $x=0$ and propagate to the right. The computational domain is $[-5,23.23]\times[-0.2,0.2]$ discretized into $500 \times 8$ uniform elements. The bottom along with $y=0$ and the initial solitary wave is shown in Figure \ref{Fig:composite_bottom}.
The outgoing boundary condition is used at the left boundary, the reflective boundary condition is employed at the right boundary and the periodic boundary condition is used at the upper and bottom boundaries. We observe the propagation over the beach, reflection on the vertical wall before traveling back to the left boundary. The time series of the wave elevation at several positions of gauges ($x=15.04, 19.4, 22.33$, $y=0$) are shown in Figure \ref{Fig:composite-beach-gages}. Overall, both sets of the numerical results obtained from the Green-Naghdi models ($\alpha=1$ and $\alpha=1.159$) compare well together before the wave encounters the vertical wall. However, discrepancies in amplitude and phase can be observed after the wave reflects on the vertical wall. Especially, discrepancies are clearer when the positions of gauges are far away from the vertical wall. Our numerical solution with $\alpha=1.159$ matches well with the one reported in \cite{Lannes2015}.

\begin{figure}
\begin{center}
\includegraphics[height=6.0cm,width=10.0cm,angle=0]{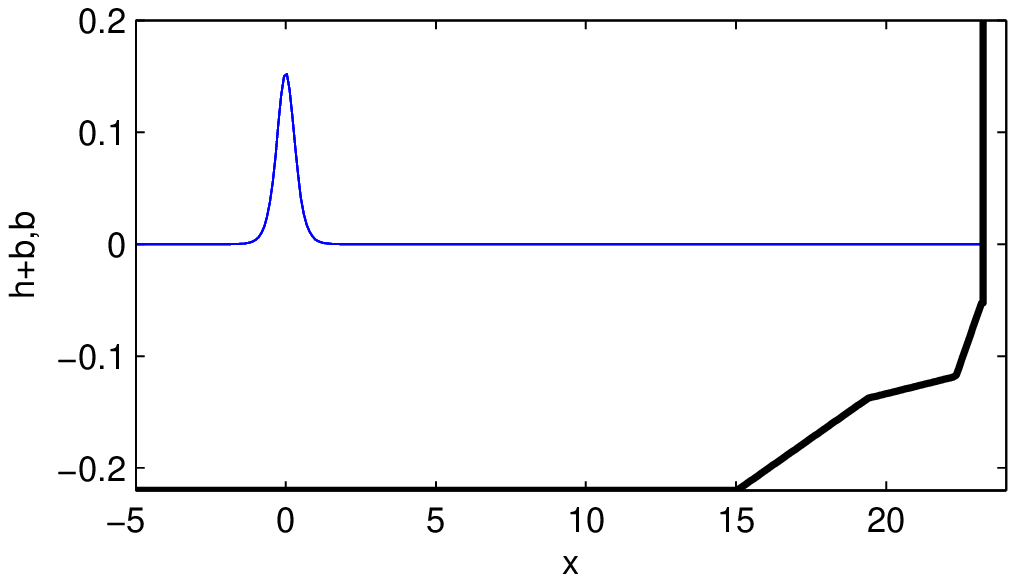}
\caption{The initial solitary wave and the bottom topography for solitary wave propagation over a composite beach.}
\label{Fig:composite_bottom}
\end{center}
\end{figure}

\begin{figure}
\begin{center}
\includegraphics[height=3.5cm,width=14.0cm,angle=0]{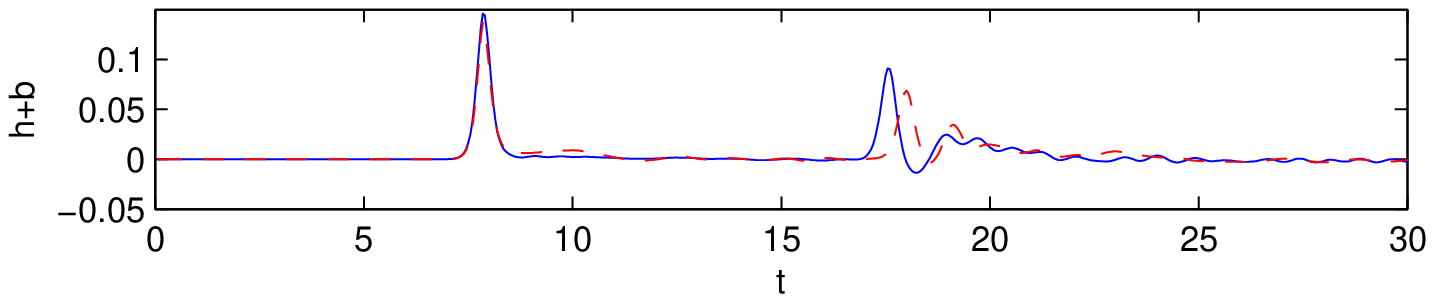}\\
\includegraphics[height=3.5cm,width=14.0cm,angle=0]{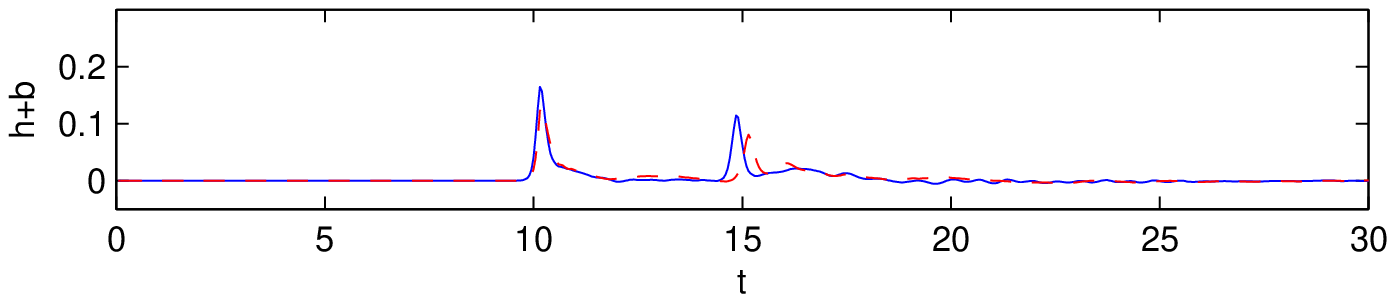}\\
\includegraphics[height=3.5cm,width=14.0cm,angle=0]{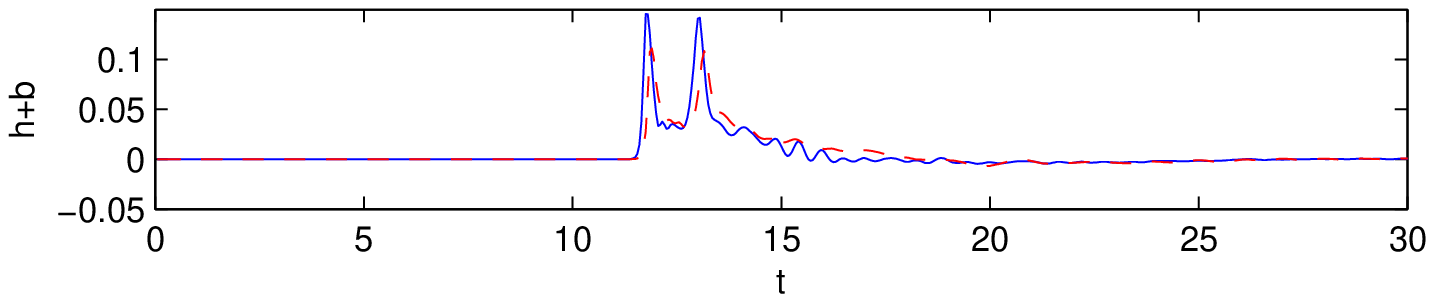}
\caption{The time series of the wave elevation at several gages ($x=15.04, 19.4, 22.33$, $y=0$, from top to bottom).  Blue line: numerical results with $\alpha=1$; Red dashed line: numerical results with $\alpha=1.159$.}
\label{Fig:composite-beach-gages}
\end{center}
\end{figure}

\section{Conclusions}
In this work, we derive a Green-Naghdi model with enhanced dispersive property and then develop a family of high order positivity-preserving and well-balanced numerical methods for its numerical solutions. These methods are based on the reformulation of the original system into a pseudo-conservation law coupled with an elliptic system.  Numerical experiments are presented to demonstrate the accuracy and achievement of expected  properties for the proposed schemes, and the capability of the Green-Naghdi equations to model a wide range of shallow water wave phenomena. Application of the proposed schemes to Green-Naghdi models taking into account the irrotational effect and the Coriolis effect due to the Earth's rotation, and proposal of fast solvers for the elliptic  part of the model will be envisioned for our future work.

\section*{Acknowledgments}
Maojun Li is partially supported by NSFC (Grant Nos. 11501062, 11701055, 11871139). Liwei Xu is partially supported by a Key Project of the Major Research Plan of NSFC (Grant No. 91630205) and a NSFC (Grant No. 11771068).

\section*{Appendix}

Here, we derive the fully nonlinear and weakly dispersive shallow water equations over the non-flat bottom in 2D space \eqref{GN-NFB-2D} along the same lines in \cite{Su1969} where the authors derived the one-dimensional equations on the flat bottom. Let $\Omega(t) $ be the domain in $R^3$ occupied by the water at time $t$. The propagation of the water is described by the fully nonlinear Euler equations
\begin{eqnarray}
% \nonumber to remove numbering (before each equation)
  \tilde u_t + \tilde u \tilde u_x + \tilde v \tilde u_y + \tilde w \tilde u_z &=& -p_x ~,   \label{Euler1}\\
  \tilde v_t + \tilde u \tilde v_x + \tilde v \tilde v_y + \tilde w \tilde v_z &=& -p_y ~,   \label{Euler2}\\
  \tilde w_t + \tilde u \tilde w_x + \tilde v \tilde w_y + \tilde w \tilde w_z &=& -p_z-g ~, \label{Euler3}
\end{eqnarray}
and the continuity equation
\begin{equation}\label{cont-equa}
\tilde u_x + \tilde v_y + \tilde w_z = 0 ~,
\end{equation}
where $(\tilde u, \tilde v, \tilde w)$ denotes the velocity of the water, $p$ is the pressure and $g$ is the gravitational constant.  The subscript $t$ denotes the partial derivative with respect to the time variable, $x$, $y$, and $z$ denote the partial derivatives with respect to the space variables. We assume that the density is taken as one. The boundary conditions are given by

(BC1) the kinematic condition at the free surface
\begin{equation}\label{boun-cond-1}
\tilde{w}^{(s)} = h_t + \tilde{u}^{(s)} (h+b)_x + \tilde{v}^{(s)} (h+b)_y~,
\end{equation}

(BC2) the impermeability of the bottom
\begin{equation}\label{boun-cond-2}
\tilde{w}^{(b)} = \tilde{u}^{(b)} b_x + \tilde{v}^{(b)} b_y~,
\end{equation}
where $h(x,y,t)$ denotes the depth of the water and $b(x,y)$ is the bottom topography.  The superscript $(s)$ and $(b)$  denote the quantities evaluated at the free surface and the bottom, respectively.

Let $u(x,y,t)$ and $v(x,y,t)$ denote the vertically averaged horizontal velocity in the x- and y-directions,  respectively, and be defined by
\begin{eqnarray}
u(x,y,t)&=&\frac{1}{h(x,y,t)}\int_{b}^{h+b}\tilde{u}(x,y,z,t)dz \label{aver-velo-u},\\
v(x,y,t)&=&\frac{1}{h(x,y,t)}\int_{b}^{h+b}\tilde{v}(x,y,z,t)dz. \label{aver-velo-v}
\end{eqnarray}

Along the same lines as in \cite{Su1969}, we want to find a set of equations which governs the evolution of the water depth $h(x,y,t)$, the average velocity $u(x,y,t)$ and $v(x,y,t)$ under the following assumptions:

(A1) we assume that the vertical movement of a particle is small compared with the horizontal movement, that is, we use the shallow water hypothesis, so that we can write
\begin{equation}\label{assu-1}
\tilde u(x,y,z,t) \simeq u(x,y,t) \mbox{ and } \tilde v(x,y,z,t) \simeq v(x,y,t),
\end{equation}

(A2) the dynamic condition at the free surface is assumed to be
\begin{equation}\label{assu-2}
p^{(s)} \simeq \mbox{constant}.
\end{equation}

Integrating (\ref{cont-equa}) with respect to $z$ gives
\begin{equation}\label{tilde-w}
\tilde{w}=\tilde{w}^{(b)}-\int_{b}^{z} \tilde{u}_x dz - \int_{b}^{z} \tilde{v}_y dz=-\left( \int_{b}^{z} \tilde{u} dz \right)_x - \left(\int_{b}^{z} \tilde{v} dz\right)_y~.
\end{equation}
Here,  we have used \eqref{boun-cond-2}. Equation \eqref{tilde-w} also implies
\begin{equation}\label{tilde-ws}
\tilde{w}^{(s)}=\tilde{w}^{(b)}-\int_{b}^{h+b} \tilde{u}_x dz - \int_{b}^{h+b} \tilde{v}_y dz.
\end{equation}

Multiplying \eqref{aver-velo-u} by $h$ and taking the derivative with respect to $x$, multiplying \eqref{aver-velo-v} by $h$ and taking the derivative with respect to $y$, and then adding them together, one arrives at
\begin{eqnarray}\label{hux-hvy}
(hu)_x+(hv)_y
&=&\left( \int_{b}^{h+b}\tilde{u}dz \right)_x + \left( \int_{b}^{h+b}\tilde{v}dz \right)_y \nonumber \\
&=& \int_{b}^{h+b}\tilde{u}_xdz + \tilde{u}^{(s)}(h+b)_x - \tilde{u}^{(b)} b_x \nonumber \\
&+& \int_{b}^{h+b}\tilde{v}_ydz + \tilde{v}^{(s)}(h+b)_y - \tilde{v}^{(b)} b_y.
\end{eqnarray}

Utilizing \eqref{boun-cond-1}, \eqref{boun-cond-2} and \eqref{tilde-ws}, one gets from \eqref{hux-hvy}
\begin{equation}\label{GN2D-1}
h_t+(hu)_x+(hv)_y=0.
\end{equation}
This equation gives the evolution of $h$ provided we know the evolution of $hu$ and $hv$.

Integrating \eqref{Euler1} with respect to $z$ from $z=b$ to $z=h+b$, performing an integration by parts for the fourth term and utilizing \eqref{cont-equa}, \eqref{boun-cond-1} and \eqref{boun-cond-2}, one obtains
\begin{equation}\label{hut1}
(hu)_t+\left( \int_{b}^{h+b}(\tilde{u}^2+p)dz\right)_x+\left( \int_{b}^{h+b}(\tilde{u} \tilde{v})dz\right)_y=p^{(s)}(h+b)_x-p^{(b)}b_x.
\end{equation}
Similarly, one has from \eqref{Euler2}
\begin{equation}\label{hvt1}
(hv)_t+\left( \int_{b}^{h+b}(\tilde{u} \tilde{v})dz\right)_x+\left( \int_{b}^{h+b}(\tilde{v}^2+p)dz\right)_y=p^{(s)}(h+b)_y-p^{(b)}b_y.
\end{equation}

Integrating \eqref{Euler3} with respect to $z$ from $z$ to $h+b$ yields
\begin{equation}\label{pres1}
p = \int_{z}^{h+b}( \tilde w_t + \tilde u \tilde w_x + \tilde v \tilde w_y + \tilde w \tilde w_z)dz + p^{(s)} + g(h+b-z),
\end{equation}
thus one gets
\begin{eqnarray}\label{aver-p}
\int_{b}^{h+b}pdz &=&\int_{b}^{h+b} \left( \int_{z}^{h+b}( \tilde w_t + \tilde u \tilde w_x + \tilde v \tilde w_y + \tilde w \tilde w_z)dz + p^{(s)} + g(h+b-z) \right)dz \nonumber \\
&=& \int_{b}^{h+b} (z-b)( \tilde w_t + \tilde u \tilde w_x + \tilde v \tilde w_y + \tilde w \tilde w_z ) dz + p^{(s)}h + \frac{1}{2}gh^2  \nonumber \\
&=& -\int_{b}^{h+b} (z-b)\left( \left( \int_{b}^{z} \tilde{u} dz \right)_{xt} + \left(\int_{b}^{z} \tilde{v} dz\right)_{yt} \right)dz \nonumber \\
&-& \int_{b}^{h+b} (z-b)\tilde{u} \left( \left( \int_{b}^{z} \tilde{u} dz \right)_{xx} + \left(\int_{b}^{z} \tilde{v} dz\right)_{yx} \right)dz \nonumber \\
&-& \int_{b}^{h+b} (z-b)\tilde{v} \left( \left( \int_{b}^{z} \tilde{u} dz \right)_{xy} + \left(\int_{b}^{z} \tilde{v} dz\right)_{yy} \right)dz \nonumber \\
&+& \int_{b}^{h+b} (z-b)(\tilde{u}_x+\tilde{v}_y) \left( \left( \int_{b}^{z} \tilde{u} dz \right)_{x} + \left(\int_{b}^{z} \tilde{v} dz\right)_{y} \right)dz \nonumber \\
&+&  p^{(s)}h + \frac{1}{2}gh^2~.
\end{eqnarray}
Here,  we have used \eqref{tilde-w} and \eqref{cont-equa}.

Now, substituting \eqref{aver-p} into \eqref{hut1} and \eqref{hvt1}, respectively, and using the assumptions \eqref{assu-1}-\eqref{assu-2}, one obtains
\begin{equation}\label{GN2D-2}
(hu)_t+\left( hu^2+ \frac{1}{2}gh^2+\frac{1}{3}h^3 \Phi + \frac{1}{2} h^2 \Psi \right)_x + \left( huv \right)_y =-\left(gh+\frac{1}{2}h^2\Phi+h\Psi\right)b_x
\end{equation}
and
\begin{equation}\label{GN2D-3}
(hv)_t+\left(huv\right)_x+\left( hv^2+ \frac{1}{2}gh^2+\frac{1}{3}h^3\Phi+\frac{1}{2}h^2\Psi\right)_y=-\left(gh+\frac{1}{2}h^2\Phi+h\Psi\right)b_y
\end{equation}
where
\begin{eqnarray}
\Phi &=& - u_{xt}-u u_{xx}+ u_x^2 - v_{yt}-v v_{yy}+ v_y^2 - uv_{xy}- u_{xy}v+ 2 u_x v_y, \label{phi2D-1} \\
\Psi &=& b_x u_{t}+b_x u  u_{x}+b_{xx} u^2 + b_y v_{t} + b_y v v_y +b_{yy} v^2 + b_y u v_x + b_x u_y v + 2b_{xy}uv. \label{psi2D-1}
\end{eqnarray}

Therefore, we get a set of equations by combining \eqref{GN2D-1}, \eqref{GN2D-2} and \eqref{GN2D-3}
\begin{equation}\label{GN-NFB-2D-1}
\left\{\begin{array}{lclcl}
h_t+(hu)_x+(hv)_y=0,\\
(hu)_t+\left( hu^2+ \frac{1}{2}gh^2+\frac{1}{3}h^3 \Phi + \frac{1}{2} h^2 \Psi \right)_x + \left( huv \right)_y =-\left(gh+\frac{1}{2}h^2\Phi+h\Psi\right)b_x,\\
(hv)_t+\left(huv\right)_x+\left( hv^2+ \frac{1}{2}gh^2+\frac{1}{3}h^3\Phi+\frac{1}{2}h^2\Psi\right)_y=-\left(gh+\frac{1}{2}h^2\Phi+h\Psi\right)b_y.
\end{array} \right.
\end{equation}

%\section*{References}
\bibliographystyle{plainnat}

\end{document}